\documentclass[12pt]{amsart}

\usepackage[left=0.7in, right=0.7in, top=1in, bottom=1in]{geometry}

\usepackage{amssymb}
\usepackage{amsfonts}
\usepackage{amsthm}

\usepackage{graphicx}
\usepackage{epstopdf}
\DeclareGraphicsExtensions{.pdf,.png,.jpg,.eps}

\usepackage{mathtools}
\usepackage{algpseudocode}
\usepackage{algorithm}
\usepackage{enumitem}
\usepackage{ulem}
\usepackage{color}
\usepackage{hyperref}
\usepackage{cleveref}
\usepackage[font=small, labelfont=bf]{caption}

\hypersetup{
  pdftitle={Support Graph Preconditioners for Off-Lattice Cell-Based Models},
  pdfauthor={Justin Steinman and Andreas Buttensch\"on},
  colorlinks=true,
  linkcolor=blue,
  citecolor=blue,
  urlcolor=blue
}

\theoremstyle{plain}
\newtheorem{theorem}{Theorem}[section]
\newtheorem{lemma}[theorem]{Lemma}

\theoremstyle{definition}
\newtheorem{definition}[theorem]{Definition}

\theoremstyle{remark}

\newtheorem{cor}[theorem]{Corollary}

\crefname{hypothesis}{Hypothesis}{Hypotheses}


\DeclareMathOperator{\spn}{span}
\newcommand{\R}{\mathbb{R}}
\renewcommand{\vec}[1]{\ensuremath{\mathbf{#1}}}
\newcommand{\mat}[1]{\ensuremath{\uline{#1}}}
\newcommand{\lb}{\left(}
\newcommand{\rb}{\right)}
\DeclarePairedDelimiter{\norm}{\lVert}{\rVert}


\title{Support Graph Preconditioners for Off-Lattice Cell-Based Models}

\author{Justin Steinman}
\address{Department of Mathematics and Statistics, University of Massachusetts Amherst, Amherst, MA}
\email{jsteinman@umass.edu}

\author{Andreas Buttensch\"{o}n}
\address{Department of Mathematics and Statistics, University of Massachusetts Amherst, Amherst, MA}
\email{abuttenschoe@umass.edu}

\subjclass[2020]{05C50, 65F08, 92-08}
\keywords{Support graph preconditioner, agent-based model, maximum spanning tree, block Laplacian, conjugate gradient method}

\begin{document}

\begin{abstract}
  Off-lattice agent-based models (or cell-based models) of multicellular systems are increasingly used to create in-silico models of in-vitro and in-vivo experimental setups of cells and tissues, such as cancer spheroids, neural crest cell migration, and liver lobules. These applications, which simulate thousands to millions of cells, require robust and efficient numerical methods. At their core, these models necessitate the solution of a large friction-dominated equation of motion, resulting in a sparse, symmetric, and positive definite matrix equation. The conjugate gradient method is employed to solve this problem, but this requires a good preconditioner for optimal performance. In this study, we develop a graph-based preconditioning strategy that can be easily implemented in such agent-based models. Our approach centers on extending support graph preconditioners to block-structured matrices. We prove asymptotic bounds on the condition number of these preconditioned friction matrices. We then benchmark the conjugate gradient method with our support graph preconditioners and compare its performance to other common preconditioning strategies.
\end{abstract}

\maketitle

\section{Introduction}

Agent-based models that simulate individual entities such as humans, animals, or biological cells are an indispensable tool for studying emergent behaviors in complex systems.
Over the last few decades, biomedical research has adopted agent-based models to develop
digital-twins of in-vitro and in-vivo experiments on cell cultures and tissues
\cite{Buttenschon2020}. To capture a wide variety of applications and research questions,
many different agent-based models have been developed. Broadly, we categorize them into
lattice-based (e.g.\ Cellular Automata \cite{hadeler2017cellular} or Cellular Potts models \cite{graner1992simulation}) and off-lattice models \cite{VANLIEDEKERKE2018}. These different
models each have advantages and disadvantages. For an overview, we refer the reader to the
review \cite{Van_Liedekerke2015-fm}. Here, we
focus on off-lattice models closely related to colloidal physics \cite{Drasdo2005-xf}.
In these models, cells are approximated by elastic spheroids \cite{Drasdo2005-cs,Ghaffarizadeh2018},
capsules \cite{MesenchymalCells}, ellipsoids \cite{Palsson2008}, or surfaces of triangulated
meshes \cite{VANLIEDEKERKE2018}.
The applications of these models are highly varied, including slime-mold
aggregation \cite{Palsson2008}, cancer growth and migration \cite{knutsdottir20163,macklin2012patient},
cancer monolayers and spheroids \cite{Drasdo2005-cs,van2019quantitative},
liver lobules \cite{hoehme2010prediction}, and neural crest cells \cite{mclennan2015neural}.

\begin{figure}[!ht]\centering
    \includegraphics[width=\textwidth]{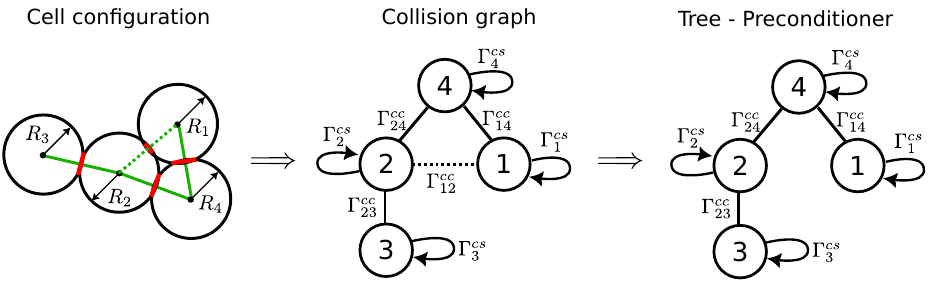}
    \caption{Graphical overview of our preconditioner construction.
    The figure illustrates the step-by-step construction of our proposed preconditioner,
    proceeding from left to right.
    A) Agent configuration. The initial setup showing individual agents (cells) in a spatial
    arrangement.
    B) The collision graph. Using collision detection algorithms, we construct a graph
    where nodes represent agents and edges represent collisions or friction interactions
    between them. The friction matrix is the graph Laplacian of the collision graph.
    C) The maximum spanning tree. Using Prim's algorithm, we construct a maximum spanning
    tree from the collision graph. The graph Laplacian of this tree, is used as the
    preconditioner.
    }\label{fig:graph}
\end{figure}

A typical cell configuration of spherical cells with radii $R_i$
is shown in Figure~\ref{fig:graph}. Advancing the simulation from $t \to t + \Delta t$
requires solving the overdamped equation of motion
$\Gamma \vec{v} = \vec{F}$, where $\Gamma$ is the
friction matrix, $\vec{v}$ the cells' velocities, and $\vec{F}$ the forces.
The matrix $\Gamma$ is block-structured, symmetric, and positive definite
because the individual $3 \times 3$ friction matrices are symmetric and positive definite \cite{VANLIEDEKERKE2018}.
There is a nonzero off-diagonal block in $\Gamma$ at position $(i, j)$ when cells
$i$ and $j$ are in contact.
We ultimately solve this large linear system using the conjugate gradient method.

Profiling our simulation software used in
\cite{MesenchymalCells,VANLIEDEKERKE2018,van2019quantitative} shows
that solving the equation of motion is often the most time-intensive step.
We hypothesize that this is because no good preconditioners have been identified or developed so far.
From an implementation point of view, it is convenient to implement the linear
algebra methods in a matrix-free manner. This means that ``off the shelf'' preconditioning
techniques are difficult to use or adapt. Further, the condition number of the friction
matrix $\Gamma$ is determined by the cells' free surface area. This means that the condition
number of the friction matrix varies during a given simulation.
Additionally, the sparsity structure of $\Gamma$ is dynamic because it encodes
interacting cell pairs. This makes selecting a preconditioner difficult.
Here, we solve this by using the collision graph constructed during the collision detection phase
as our central data structure instead of the usual sparse matrix implementations.

These observations, together with the increasing role agent-based models play in
biomedical research, motivate our work. Our goal is to develop and benchmark
a preconditioning strategy for the friction matrix $\Gamma$ that is easily implemented
in a matrix-free manner, and reduces the required computational time for solving the linear system.

\subsection{Support Graph Preconditioners}

The computational cost per iteration of the conjugate gradient method is dominated by the matrix-vector
product. Thus, we aim to reduce the number of required iterations.
Let $\vec{e}_k$ denote the true error at the $k$-th iteration (i.e.\ the difference between the computed
approximation at the $k$-th iteration and the true solution).
The well-known error estimate for the conjugate gradient method is given by
\cite{shewchuk1994introduction}:
\[
    \norm{\vec{e}_k}_{\Gamma} \leq 2 \lb \frac{\sqrt{\kappa(\Gamma)} - 1}{\sqrt{\kappa(\Gamma)} + 1} \rb^k \norm{\vec{e}_0}_{\Gamma},
\]
where $\kappa(\Gamma)$ is the spectral condition number of $\Gamma$, which is a function
of the contact areas and the ratio of the friction coefficients.

The error estimate suggests that a matrix with higher condition number
requires more iterations. However,
conjugate gradient convergence is more complex and often faster than this estimate suggests \cite{liesen2013krylov}.
While this estimate has limited practical use, it does motivate the reduction
of the condition number through preconditioning.

To precondition the system, we choose a symmetric matrix
$H = EE^T$ such that $\Gamma^{-1} \sim H$. We then solve the modified system
$E^T \Gamma E \hat{\vec{x}} = E^T \vec{F}$ and $\vec{x} = E\hat{\vec{x}}$.
This approach often reduces the iteration count if
$\kappa(E^T \Gamma E) < \kappa(\Gamma)$, and reduces run time if
we choose $H$ so that solving $H\vec{x} = \vec{b}$ is computationally inexpensive.

Preconditioning is crucial in many problems, particularly those
arising from the discretization of partial differential equations. While no unifying
theory exists, it is a well-developed field \cite{demmel1997applied,greenbaum1997iterative}.
Our focus on a matrix-free implementation limits the direct application of many
existing preconditioning techniques to our problem.

In off-lattice agent-based models, we can interpret the off-diagonal sparsity pattern
of $\Gamma$ as a graph $\mathfrak{C}$ (see Figure~\ref{fig:graph}). This graph representation
works as follows:

\begin{itemize}

    \item Each cell at position $\vec{r}_i$ is represented as the $i$-th vertex.

    \item We draw an edge $e = (i, j)$ between vertices $i$ and $j$ when the cells'
    contact area $A_{ij}$ is nonzero.

    \item The edges are weighted by the cell-cell friction matrices $\Gamma_{ij}^{cc}$.

    \item Cell-substrate matrices $\Gamma_i^{cs}$ are represented with weighted self-loops.

\end{itemize}

The result is an undirected, matrix-weighted, and labeled (by cell id) graph that represents
the friction matrix.

The relationship between matrices and graphs is well-established
\cite{scott2023algorithms} and underpins many algorithms for sparse matrices.
Typically, the matrix is constructed first, and its underlying
graph is derived subsequently. Our approach reverses this process: we start
with the collision graph constructed during the collision detection phase, and
derive the friction matrix from it.
Specifically, the friction matrix is the block Laplacian of the collision graph.

We employ a technique pioneered by Vaidya \cite{vaidya1991solving} that
uses subgraphs of $\mathfrak{C}$ as preconditioners. Subgraphs are advantageous
because they are sparse, yet capture much of the relevant information from $\mathfrak{C}$.
This characteristic allows them to effectively balance between approximating $\Gamma$
and computational efficiency. In our implementation, we specifically use a maximum spanning tree.
This tree can be constructed in linearithmic time, and its associated matrix can be factored
in linear time. The broader study of using subgraphs as preconditioners is
known as support graph theory.

Vaidya's original manuscript lacked many proofs, which were later provided in \cite{bern2006support}.
In our work, we extend these proofs to apply to block-structured matrices.
This extension allows us to obtain estimates for the smallest and largest eigenvalues of
such preconditioned systems. These eigenvalue bounds serve two important purposes: (1)
they provide worst-case convergence estimates, and (2) They are valuable in implementing the
robust conjugate gradient stopping criteria proposed by \cite{Axelsson2001,Meurant2024}.

\subsection{Outline}

The remainder of this paper is organized as follows:
In Section~\ref{sec:matrix_structure}, we introduce the linear system arising from
agent-based models and its natural graph structure. This section provides the foundation
for understanding the mathematical framework of our approach.
Section~\ref{sec:support_graph} focuses on Vaidya's preconditioners. We explain how to
construct these preconditioners for block-structured matrices and solve the resulting
systems in near-linear time. This section bridges the gap between graph theory
and numerical linear algebra.
Section~\ref{sec:convergence} presents our main theoretical contribution. Here, we extend
support graph theory to block-structured matrices and derive eigenvalue bounds for
the preconditioned linear system. This extension is crucial for applying
support graph theory to the matrices arising in agent-based models.
In Section~\ref{sec:numerics}, we present our numerical results. We demonstrate
the effectiveness of our preconditioner using a series of numerical benchmarks.
These experiments validate our theoretical findings and showcase the practical
benefits of our approach.
Finally, Section~\ref{sec:discussion} concludes the paper with a discussion of our findings,
and its implications for real-world use cases.
We situate our contributions in the broader theoretical landscape and discuss connections
to the existing literature on this problem. Additionally, we provide potential
directions for future research in this area.

\section{Preliminaries}\label{sec:matrix_structure}

This section introduces the basic graph and matrix structure of our problem. From the collision detection phase of an off-lattice simulation, we derive a matrix-vector equation whose solution represents the velocities of all the cells. We then show that the matrix we are solving is the block Laplacian of the collision graph, and we use this to prove positive definiteness. To set the stage for our subsequent discussion, we briefly outline the steps of an off-lattice model, considering a simulation of $n$ spherical cells.

\begin{enumerate}[label=\textbf{Step~\arabic*:}, ref=Step~\arabic*, leftmargin=*, labelindent=\parindent]

    \item \underline{Broad-phase collision detection.}
    Identify possible cell contact pairs $(i, j)$.
    Efficient divide-and-conquer algorithms, such as axis-aligned bounding boxes, are
    commonly used \cite{tracy2009efficient}.

    \item \underline{Compute forces between cells.}
    Examine the possible cell pairs identified in the previous step, and identify interacting
    cells. Then, compute their contact area and contact force using a physical model.
    We denote the contact area between cells $i$ and $j$ by $A_{ij}$.
    For spherical cells, Hertz or JKR contact mechanics are commonly
    used \cite{VANLIEDEKERKE2018}.

    \item \underline{Assemble the friction matrices.} Construct
    the $3 \times 3$ cell-cell and cell-substrate friction matrices. The cell-cell friction matrix between cells $i$ and $j$ is given by
    \begin{equation}\label{eq:cell_cell_friction}
        \Gamma_{ij}^{cc} = A_{ij}\lb \gamma_{\parallel} \vec{u}_{ij} \otimes \vec{u}_{ij}
            + \gamma_{\perp} \lb I - \vec{u}_{ij} \otimes \vec{u}_{ij} \rb\rb,
    \end{equation}
    where $\gamma_{\parallel}$ and $\gamma_{\perp}$ are the parallel and perpendicular coefficients of friction,
    respectively (both of which are positive), and $\vec{u}_{ij} \in \R^3$ is the unit contact vector
    \cite{VANLIEDEKERKE2018}. If $\vec{r}_i$ is the position of cell $i$, then
    \[
        \vec{u}_{ij} = \frac{\vec{r}_j - \vec{r}_i}{\norm{\vec{r}_j - \vec{r}_i}}.
    \]
    The cell-substrate friction matrix has several possible forms depending on the cell shape. In the isotropic case of spherical cells, we can write $\Gamma^{cs}_i = \lambda_{\mathrm{med}}I$, where $\lambda_{\mathrm{med}}$ is the coefficient of friction between the cell and the medium. For ellipsoidal cells, the cell-substrate friction matrix takes on a similar form to the cell-cell friction matrix in terms of directional friction coefficients and the unit direction vector. More complicated cell shapes may not be as easily expressible, but we only require that all the friction matrices are symmetric positive definite.

    \item \underline{Solve the equation of motion.} The equation of motion for cell $i$ is
    \begin{equation}\label{Eq:Motion}
        \Gamma^{cs}_i \vec{v}_i + \sum_{A_{ij} > 0} \Gamma^{cc}_{ij} (\vec{v}_i - \vec{v}_j) = \vec{F}_i,
    \end{equation}
    where $\vec{v}_{i}$ is the velocity vector and $\vec{F}_i$ is the
    total nonfrictional force acting on the cell.
    We interpret Equation~\eqref{Eq:Motion} as one row of a large linear system $\Gamma\vec{v}=\vec{F}$. For $n$ cells, the friction matrix $\Gamma$ is $3n \times 3n$, and
    we show that $\Gamma$ is symmetric
    positive definite. Hence, the conjugate gradient method is an efficient choice for obtaining
    an accurate solution.

    \item \underline{Update cell positions.}
    Frequently, a forward Euler method is used:
    \[
        \vec{r}_i(t + \Delta t) = \vec{r}_i(t) + \Delta t\, \vec{v}_i,
    \]
    where the step-size $\Delta t$ is chosen according to the Euler's method stability criterion.
    Higher-order integration methods are rarely used. Two exceptions are
    PhysiCell \cite{Ghaffarizadeh2018}, which employs a second-order Adam's-Bashford method with
    a fixed time-step, and \cite{MesenchymalCells}, where an embedded Runge-Kutta-23 method
    with an adaptive time-step is employed.

\end{enumerate}

Since the friction matrix $\Gamma$ is composed of $3\times 3$ blocks, we establish a few simple properties of these smaller friction matrices. Note that each cell-cell friction matrix (and the cell-substrate friction matrix of an ellipsoidal cell) is the sum of two orthogonal projectors. Let $\gamma_{\mathrm{max}}$ and $\gamma_{\mathrm{min}}$ be the maximum and minimum elements of $\{\gamma_\parallel, \gamma_\perp\}$, respectively.

\begin{lemma}\label{Lemma:SymPosDefBuildingBlocks}
    Let $\vec{u} \in \R^n$ be a unit vector, then the matrix
    \[
        \Upsilon = \gamma_{\parallel} \vec{u} \otimes \vec{u}
            + \gamma_{\perp} \lb I - \vec{u} \otimes \vec{u} \rb,
    \]
    \begin{enumerate}
        \item is symmetric positive definite;

        \item its eigenvalues are $\gamma_{\parallel}$ and $\gamma_{\perp}$ with multiplicities
            $1$ and $2$ respectively;

        \item its eigenspaces are $E_{\gamma_{\parallel}} = \spn\lb \vec{u} \rb$ and
            $E_{\gamma_{\perp}} = E_{\gamma_{\parallel}}^{\perp}$;

        \item its operator norm (with respect to the 2-norm) is $\norm{\Upsilon} = \gamma_{\mathrm{max}}$; and

        \item its condition number is
            \[
                \kappa(\Upsilon) =
                    \frac{\gamma_{\mathrm{max}}}{\gamma_{\mathrm{min}}}.
            \]
    \end{enumerate}
\end{lemma}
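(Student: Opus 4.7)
The strategy is to recognize that $P := \vec{u} \otimes \vec{u}$ is the orthogonal projection onto $\spn(\vec{u})$ and that $I - P$ is the complementary orthogonal projection onto $\spn(\vec{u})^{\perp}$, and then read off all five claims directly from the spectral decomposition $\Upsilon = \gamma_{\parallel}P + \gamma_{\perp}(I-P)$. None of the steps require heavy computation; the only subtlety is the stated multiplicities, which implicitly use the ambient dimension $n=3$ (so that the orthogonal complement of $\vec{u}$ has dimension $2$).

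First I would note symmetry, since $P^{T} = (\vec{u}\vec{u}^{T})^{T} = P$, so $\Upsilon$ is a real linear combination of symmetric matrices and is itself symmetric. Next, I would compute the action of $\Upsilon$ on $\vec{u}$ and on any $\vec{w} \perp \vec{u}$: because $P\vec{u} = \vec{u}$ and $P\vec{w} = 0$, one gets $\Upsilon \vec{u} = \gamma_{\parallel}\vec{u}$ and $\Upsilon \vec{w} = \gamma_{\perp}\vec{w}$. This immediately yields claims (2) and (3): $\gamma_{\parallel}$ has eigenspace $\spn(\vec{u})$ (multiplicity $1$) and $\gamma_{\perp}$ has eigenspace $\spn(\vec{u})^{\perp}$ (multiplicity $n-1$, i.e.\ $2$ in the intended setting).

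Since both eigenvalues $\gamma_{\parallel}, \gamma_{\perp}$ are positive by hypothesis and $\Upsilon$ is symmetric, claim (1) follows from the spectral theorem: $\vec{x}^{T}\Upsilon \vec{x} = \gamma_{\parallel}\norm{P\vec{x}}^{2} + \gamma_{\perp}\norm{(I-P)\vec{x}}^{2} > 0$ for $\vec{x} \neq 0$. For claim (4), the operator $2$-norm of a symmetric positive definite matrix equals its largest eigenvalue, so $\norm{\Upsilon} = \max\{\gamma_{\parallel}, \gamma_{\perp}\} = \gamma_{\max}$. Finally, claim (5) follows because $\Upsilon^{-1} = \gamma_{\parallel}^{-1}P + \gamma_{\perp}^{-1}(I-P)$ by orthogonality of $P$ and $I-P$, so $\norm{\Upsilon^{-1}} = 1/\gamma_{\min}$ and $\kappa(\Upsilon) = \norm{\Upsilon}\norm{\Upsilon^{-1}} = \gamma_{\max}/\gamma_{\min}$.

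There is no substantial obstacle here; the whole lemma is essentially the observation that $\Upsilon$ is already written in diagonal form with respect to the orthogonal decomposition $\R^{n} = \spn(\vec{u}) \oplus \spn(\vec{u})^{\perp}$. The only thing worth being careful about is the orthogonality $P(I-P) = 0$, which lets one treat the two terms independently in both the forward and inverse computations, and the implicit assumption $n = 3$ needed to make the stated multiplicity of $\gamma_{\perp}$ equal to $2$.
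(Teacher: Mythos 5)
Your proof is correct and follows essentially the same route as the paper: identify $\vec{u}\otimes\vec{u}$ and $I-\vec{u}\otimes\vec{u}$ as complementary orthogonal projections, read off the eigenvalues and eigenspaces by acting on $\vec{u}$ and on vectors orthogonal to it, and deduce positive definiteness, the operator norm, and the condition number from the spectrum. Your remark that the stated multiplicity $2$ for $\gamma_{\perp}$ implicitly assumes $n=3$ is a fair observation that the paper glosses over, but otherwise the two arguments coincide.
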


\begin{proof}
    The projection matrices $\vec{u} \otimes \vec{u}$ and $I - \vec{u} \otimes \vec{u}$ are
    clearly symmetric.
    Note that $(\vec{u} \otimes \vec{u})\vec{u} = \vec{u}$, so
    $\Upsilon \vec{u} = \gamma_{\parallel}\vec{u}$. Since the eigenvectors of a real symmetric
    matrix are orthogonal, take $\vec{w}$ such that $\vec{w}^T\vec{u} = 0$. Then $(\vec{u} \otimes \vec{u})\vec{w} = 0$,
    so $\Upsilon \vec{w} = \gamma_{\perp} \vec{w}$. The vector $\vec{w}$ lies in the orthogonal
    complement of $\vec{u}$, which is two-dimensional.
    Both of the eigenvalues are positive, which gives positive definiteness. The operator norm of a symmetric positive definite matrix is the maximum eigenvalue.
\end{proof}

Ill-conditioned cell-cell friction matrices will make our preconditioners less effective even if $\Gamma$ as a whole is well-conditioned. In the extreme case of rank-deficient friction matrices, perhaps representing freely rotating objects, our condition number bounds in Section \ref{sec:convergence} do not hold and support graph preconditioners are likely a poor choice.

Observe that the off-diagonal sparsity pattern of $\Gamma$ is determined
by the interacting cell pairs. We interpret this as a matrix-weighted graph.

\begin{definition}[Matrix-weighted graph]
    A matrix-weighted graph is a triple $G = (V, E, w)$ where $(V, E)$ is an undirected graph with a matrix-valued weight function $w\colon V \times V \to \R^{d \times d}$. We require that $w(e)$ is positive definite for all $e \in E$, that $w(v, v)$ is positive semidefinite for all $v \in V$, and that $w(u, v) = 0$ otherwise.
\end{definition}

We allow nonzero weights on pairs $(v, v)$ for convenience when representing cell-substrate friction. To refer to the number of edges $|E|$ and the number of vertices $|V|$, we use $m$ and $n$ respectively. We reserve the Fraktur font for objects relating to the collision graph. Let $\mathfrak{D}$ be the set of cells, $\mathfrak{E}$ the set of interacting cell pairs, and $\mathfrak{w}: \mathfrak{D} \times \mathfrak{D} \to \R^{3 \times 3}$ the weight function with $\mathfrak{w}(i, j) = \Gamma^{cc}_{ij}$ for all $(i, j) \in \mathfrak{E}$ and $\mathfrak{w}(i, i) = \Gamma^{cs}_i$ for all $i \in \mathfrak{D}$. These form the collision graph $\mathfrak{C} = (\mathfrak{D}, \mathfrak{E}, \mathfrak{w})$. This graph is typically very sparse with $m = \mathcal{O}(n)$.

Most of the matrices in this paper have a block structure. If $A$ is an $m \times n$ block matrix, then we mean that $A$ is $m$ blocks tall by $n$ blocks wide. The precise size of the blocks is not important, but we assume they are square. To be explicit when indexing block matrices, we use an underline to index over blocks. For example, define a $2 \times 2$ block matrix
\[
    A = \begin{pmatrix}
        W & X \\ Y & Z
    \end{pmatrix}.
\]

Then $A_{11} = W_{11}$ and $\mat{A_{11}} = W$.
We also use this notation for vectors. If $\vec{a} = \begin{pmatrix}
    \vec{x} \\ \vec{y}
\end{pmatrix}$, then $\vec{a}_1 = \vec{x}_1$ and $\mat{\vec{a}_1} = \vec{x}$.

To formalize the relation between the friction matrix $\Gamma$ and the collision graph $\mathfrak{C}$, we introduce the block Laplacian.

\begin{definition}[Block Laplacian]
    A block Laplacian is a symmetric block matrix whose off-diagonal blocks are either zero or negative definite, and whose block row and column sums are positive semidefinite. For a matrix-weighted graph $G = (V, E, w)$, the block Laplacian of $G$ is the matrix $L$ where
    \[
        \mat{L_{ij}} = \begin{cases}
            -w(i, j) & i \neq j, \\
            w(i, i) + \sum_{k \neq i} w(i, k) & i = j.
        \end{cases}
    \]
\end{definition}

Note that block Laplacians are not diagonally dominant in general, but the condition we impose on their block row and column sums is analogous. We show in Section \ref{sec:convergence} that this generalized notion of block diagonal dominance is sufficient to apply support graph preconditioners.

Another mathematical structure that seems closely related to the block Laplacian at first glance is the connection Laplacian introduced by Singer and Wu in \cite{singerwu}. However, a key difference between the two structures is that connection Laplacians with real entries require the edge weights to be orthogonal, whereas block Laplacians only require positive definiteness. It is almost never the case that the collision graph yields a connection Laplacian because this would require both friction coefficients to be 1. The solver presented in \cite{Kyng2015SparsifiedCA} works on a class matrices satisfying a certain definition of block diagonal dominance which connection Laplacians belong to but block Laplacians generally do not. So the theory of connection Laplacians is not generally applicable to block Laplacians.

It is easy to see that $\Gamma$ is the block Laplacian of $\mathfrak{C}$. The following lemma establishes the definiteness needed to apply the conjugate gradient method.

\begin{lemma}
\label{blapposdef}
    All block Laplacians are positive semidefinite, and they are positive definite when their block row sums are positive definite.
\end{lemma}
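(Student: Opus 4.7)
The plan is to establish a block-structured analogue of the classical identity $\vec{x}^T L \vec{x} = \sum_{(i,j) \in E} w_{ij}(x_i - x_j)^2$, which makes positive semidefiniteness transparent as a sum of non-negative quadratic forms. Specifically, for any vector $\vec{x}$ partitioned into blocks $\vec{x}_i$ conformal with $L$, I would prove the identity
\[
    \vec{x}^T L \vec{x} \;=\; \sum_i \vec{x}_i^T s_i\, \vec{x}_i \;+\; \sum_{\{i,j\}:\, i \neq j} (\vec{x}_i - \vec{x}_j)^T (-\mat{L_{ij}})(\vec{x}_i - \vec{x}_j),
\]
where $s_i := \sum_j \mat{L_{ij}}$ is the $i$-th block row sum and the outer sum ranges over unordered pairs $\{i,j\}$.

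To derive this, I would expand $\vec{x}^T L \vec{x} = \sum_i \vec{x}_i^T \mat{L_{ii}} \vec{x}_i + \sum_{i \neq j} \vec{x}_i^T \mat{L_{ij}} \vec{x}_j$, rewrite the diagonal blocks via $\mat{L_{ii}} = s_i - \sum_{j \neq i} \mat{L_{ij}}$, and regroup the cross-terms by unordered pair $\{i,j\}$. Using the symmetry $\mat{L_{ij}} = \mat{L_{ji}}$, the four contributions attached to each pair---two of the form $\vec{x}_i^T(-\mat{L_{ij}})\vec{x}_i$ and $\vec{x}_j^T(-\mat{L_{ij}})\vec{x}_j$ from the diagonal substitution, plus $\vec{x}_i^T \mat{L_{ij}} \vec{x}_j + \vec{x}_j^T \mat{L_{ji}} \vec{x}_i = 2\vec{x}_i^T\mat{L_{ij}}\vec{x}_j$ from the off-diagonal expansion---collapse exactly into the perfect square $(\vec{x}_i - \vec{x}_j)^T(-\mat{L_{ij}})(\vec{x}_i - \vec{x}_j)$. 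Positive semidefiniteness is then immediate: every $s_i$ is PSD by the row-sum hypothesis, and every $-\mat{L_{ij}}$ is either zero or positive definite, so each summand is non-negative.

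For the strict inequality, suppose each block row sum $s_i$ is positive definite. If $\vec{x}^T L \vec{x} = 0$, then each non-negative summand in the identity must vanish individually; in particular $\vec{x}_i^T s_i \vec{x}_i = 0$ for every $i$, and positive definiteness of $s_i$ forces $\vec{x}_i = 0$. Hence $\vec{x} = 0$, so $L$ is positive definite.

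The only delicate step is the bookkeeping in the regrouping: one has to check that the diagonal residue from $\mat{L_{ii}} - s_i$ matches up precisely with the off-diagonal cross-terms to produce squared differences with no leftover linear-in-$\vec{x}_i$ pieces. The symmetry $\mat{L_{ij}} = \mat{L_{ji}}$ and the definition of $s_i$ are exactly what make this cancellation work, so once the algebra is carried out carefully the two conclusions fall out in a single line each.
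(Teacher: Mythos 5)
Your proof is correct, but it takes a genuinely different route from the paper's. You establish the exact block sum-of-squares identity
\[
    \vec{x}^T L \vec{x} = \sum_i \mat{\vec{x}_i^T s_i \vec{x}_i} + \sum_{i > j} (\mat{\vec{x}_i} - \mat{\vec{x}_j})^T (-\mat{L_{ij}})(\mat{\vec{x}_i} - \mat{\vec{x}_j}),
\]
the block analogue of the classical Laplacian quadratic form, and read off both conclusions from it. The paper instead proves only a chain of inequalities: it lower-bounds the diagonal contribution by $\sum_{i \neq j} |\mat{\vec{x}_i^T A_{ij} \vec{x}_i}|$ using the row-sum hypothesis, then shows each pairwise term is nonnegative via AM--GM followed by Cauchy--Schwarz applied to the positive definite form $-\mat{A_{ij}}$. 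Your identity is sharper and the bookkeeping you flag does close up (the key facts are that $\mat{L_{ji}} = \mat{L_{ij}}^T$ makes the two cross-terms per unordered pair collapse to $2\mat{\vec{x}_i^T L_{ij} \vec{x}_j}$, and that scalars equal their transposes, so the perfect square assembles with no residue). Your treatment of the definite case is also cleaner than the paper's: rather than asserting the inequality ``is strict,'' you argue that vanishing of the form forces every summand to vanish, and positive definiteness of each $s_i$ then kills every block of $\vec{x}$. The one thing the paper's inequality route buys is that it transfers verbatim to \emph{generalized} block Laplacians with positive definite off-diagonal blocks (as invoked in Appendix~A), where the perfect-square identity no longer holds with nonnegative summands; your identity, by contrast, is tied to the off-diagonal blocks being negative semidefinite.
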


\begin{proof}
    Let $A$ be a block Laplacian. For all $\vec{x}$,
    \begin{align*}
        \vec{x}^TA\vec{x} &= \sum_i \mat{\vec{x}_i^TA_{ii}\vec{x}_i} + \sum_{i \neq j} \mat{\vec{x}_i^TA_{ij}\vec{x}_j} \\
        &\geq \sum_{i \neq j} |\mat{\vec{x}_i^TA_{ij}\vec{x}_i}| + \sum_{i \neq j} \mat{\vec{x}_i^TA_{ij}\vec{x}_j} \\
        &= \sum_{i > j} \left(|\mat{\vec{x}_i^TA_{ij}\vec{x}_i}| + |\mat{\vec{x}_j^TA_{ij}\vec{x}_j}| + 2(\mat{\vec{x}_i^TA_{ij}\vec{x}_j})\right).
    \end{align*}
    This inequality is strict when the block row sums are positive definite. By the AM-GM and Cauchy-Schwarz inequalities respectively,
    \[
        |\mat{\vec{x}_i^TA_{ij}\vec{x}_i}| + |\mat{\vec{x}_j^TA_{ij}\vec{x}_j}| \geq 2\sqrt{|\mat{\vec{x}_i^TA_{ij}\vec{x}_i}| \cdot |\mat{\vec{x}_j^TA_{ij}\vec{x}_j}|} \geq 2|\mat{\vec{x}_i^TA_{ij}\vec{x}_j}|.
    \]
    This implies that each term in the summation above is nonnegative, which yields the desired result.
\end{proof}

\section{Support Graph Preconditioners}\label{sec:support_graph}

To effectively precondition $\Gamma$, we need to find an easily factorable matrix that closely approximates it. This section introduces support graph theory and Vaidya's preconditioners as tools to do this, along with efficient graph algorithms for their implementation. Typically, an underlying graph is derived from a given matrix. This graph is manipulated (e.g., by taking a subgraph) and its Laplacian is used as a preconditioner. However, since the friction matrix is derived from the collision detection phase, it is more appropriate for us to view $\Gamma$ as the underlying matrix of $\mathfrak{C}$, and we can derive preconditioners from manipulations (e.g., subgraphs) of the collision graph. This is why support graph preconditioners are a natural choice for simulations. In fact, we can entirely avoid assembling matrices by working with the collision graph. All we need are the contact areas, normal vectors, and friction coefficients.

\subsection{Vaidya's Preconditioners}
\label{sec:vaidya}

The first of Vaidya's preconditioners is the maximum spanning tree (MST) preconditioner. The idea is to precondition the Laplacian of a graph with the Laplacian of an MST. We work with trees because their Laplacians can be factored in linear time, and MSTs in particular because they capture a lot relevant information about the graph. In other words, an MST ``supports'' its parent graph well.

Since we are working with matrix-weighted graphs, we define an MST with respect to the minimum eigenvalues of the weights, a choice that is justified in the next section. In the case of $\mathfrak{C}$, this is equivalent to weighting by contact area. We also include the self-loops (i.e., the cell-substrate friction) in the MST. Let $\mathfrak{T}$ be an MST of $\mathfrak{C}$ and let $P$ be its block Laplacian. We call $\mathfrak{T}$ a support graph of $\mathfrak{C}$ and $P$ an MST preconditioner of $\Gamma$.

Vaidya's second class of preconditioners builds on the first by adding edges back to an MST. Given a parameter $t$, we split an MST into $t$ disjoint subtrees of roughly the same size where each subtree has at most $m/t$ vertices. Then we add the maximum weight edge in $\mathfrak{C}$ between each pair of subtrees if they are connected in $\mathfrak{C}$. Let $\mathfrak{T}'$ be an augmented MST and $P'$ its block Laplacian. We call $P'$ an augmented MST preconditioner of $\Gamma$. The theoretically optimal value of $t$ is approximately $n^{1/4}$ \cite{chen2003vaidya}.

The only step left to define is how we generate the support graph. Prim's algorithm is the best choice for finding an MST because it tells us, at no extra cost, how to permute the rows and columns of the block Laplacian to generate zero fill during factorization. We prove this in the next subsection. The time complexity of Prim's algorithm is $\mathcal{O}(m\log n)$. Augmenting an MST is straightforward once it has been decomposed into subtrees. A simple partitioning algorithm is presented in \cite[\textsc{TreePartition}]{chen2003vaidya} and a more sophisticated one in \cite{spielman2014nearly}.

\subsection{The Elimination Game}
\label{subsec:elimination}

When factoring or performing Gaussian elimination on a matrix, new nonzero entries may be created, changing the sparsity pattern of the matrix. These new entries, called fill, require more memory and slow down computations. However, permuting the rows and columns of the matrix can change the amount of fill. There is a graphical interpretation of Gaussian elimination that shows how fill is created, called the elimination game \cite{pothen2018elimination}.

In the game, all the vertices of a graph are eliminated in some order (e.g., see Figure~\ref{fig:elimination_game} where the elimination order is according to the vertex labels). When a vertex $v$ is eliminated, fill edges are constructed so that the uneliminated neighbors of $v$ (connected by either an original or fill edge) become pairwise adjacent. In other words, the uneliminated neighbors of $v$ become a clique. The set of fill edges is in bijection with the set of fill entries that would be created during Gaussian elimination or decomposition. Note that the elimination game is never explicitly implemented but rather implicitly performed. Using this game, we can easily analyze the amount of fill created by a given permutation. The general problem of minimizing fill is NP-hard \cite{Yannakakis1981}, but trees can easily be ordered to produce no fill.

\begin{figure}[!ht]\centering
    \includegraphics[width=0.9\textwidth]{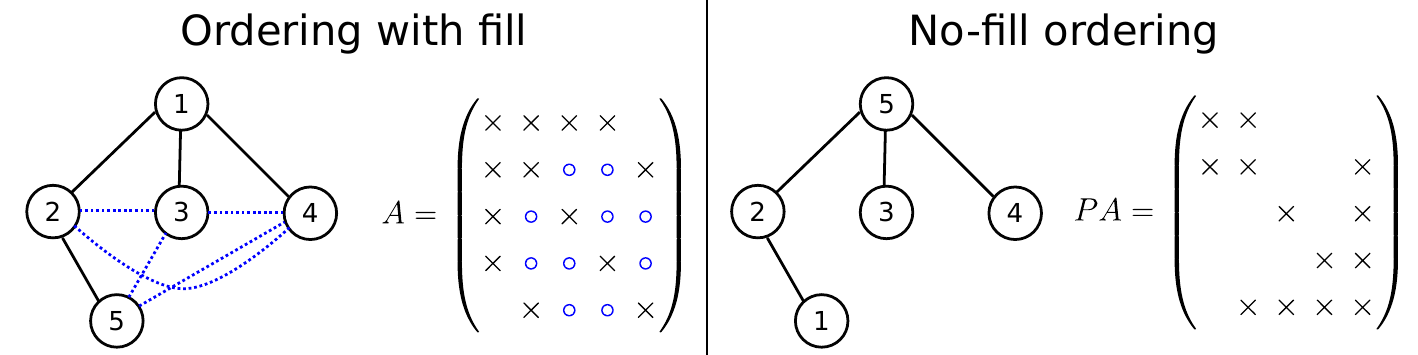}
    \caption{An example of the elimination game being played on the same graph with different
    orderings. Graph edges and matrix entries are in solid black and denoted by crosses
    respectively. Fill edges and entries are in dotted blue and blue circles respectively.
    }\label{fig:elimination_game}
\end{figure}

\begin{lemma}
    \label{nofill}
    An ordering of the vertices of a rooted tree where no vertex occurs before any of its children produces no fill in the elimination game.
\end{lemma}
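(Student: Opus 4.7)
The plan is to prove this by induction on the elimination step, using the observation that the ordering constraint forces each vertex to have at most one uneliminated neighbor at the moment it is eliminated, from which the no-fill conclusion is immediate.

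First, root the tree and fix any ordering satisfying the hypothesis. The key structural fact is that in a rooted tree every non-root vertex has a unique parent, so the neighbors of $v$ in the original tree are its parent (if any) together with its children. The hypothesis says children precede their parents in the elimination order, hence by the time $v$ is eliminated, every child of $v$ has already been removed. If I can show that no earlier elimination has introduced a fill edge incident to $v$, then the only uneliminated neighbor of $v$ at its elimination step is its parent; a set of size at most one is trivially a clique, so no new edges are added when $v$ is eliminated.

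To make this precise, I would maintain the following invariant as the induction hypothesis: just before the $k$-th elimination, the current graph restricted to the set of uneliminated vertices coincides with the subgraph of the original tree induced on that set. The base case $k=1$ is immediate since no eliminations have occurred. For the inductive step, let $v$ be the vertex eliminated at step $k$. By the invariant, the uneliminated neighbors of $v$ are exactly its uneliminated neighbors in the original tree, and by the ordering hypothesis these are contained in $\{\text{parent of }v\}$. Making a set of cardinality at most one into a clique introduces no edges, so no fill is created at this step, and the invariant carries over to step $k+1$ since the only change to the uneliminated subgraph is the deletion of $v$ and its incident edges. Summing over all steps, the total fill is zero.

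The main obstacle is purely bookkeeping: one must state the induction in a way that the two facts ``no fill has been created so far'' and ``$v$'s only uneliminated neighbor is its parent'' are established in a non-circular order. Bundling them into the single invariant above (the uneliminated subgraph equals the induced subtree) resolves this cleanly, since that invariant simultaneously encodes both properties and is preserved by each elimination.
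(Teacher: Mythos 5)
Your proof is correct and follows essentially the same route as the paper's: both argue inductively that the ordering hypothesis forces every vertex to be a leaf of the remaining tree (i.e., to have at most one uneliminated neighbor, its parent) at the moment it is eliminated, so no clique edges need to be added. Your explicit invariant that the uneliminated subgraph equals the induced subtree is just a more carefully bookkept version of the paper's shorter induction.
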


\begin{proof}
    The first vertex eliminated must be a leaf. This produces no fill and yields another rooted tree. Inductively, the hypothesis demands that each vertex must be a leaf when it is eliminated, otherwise, it would have children to eliminate first. Since eliminating leaves produces no fill, we get the desired result.
\end{proof}

\begin{cor}
    The reverse order in which vertices are added to the MST in Prim's algorithm produces no fill in the elimination game.
\end{cor}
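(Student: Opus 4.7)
The plan is to show that the reversed Prim's ordering satisfies the hypothesis of Lemma~\ref{nofill}, and then invoke that lemma directly. The key observation is that Prim's algorithm naturally induces a rooted-tree structure on $\mathfrak{T}$, with the parent-child relation running opposite to the insertion order.

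More concretely, I would fix the first vertex $r$ chosen by Prim's algorithm as the root of $\mathfrak{T}$. By definition, each subsequent vertex $v$ added by Prim's is attached to the partial tree by a single edge $(u,v)$, where $u$ was already present; I would declare this $u$ to be the parent of $v$. Since $u$ was added strictly before $v$, it follows that in the Prim's insertion order every non-root vertex appears after its parent. Equivalently, in the reversed order, every vertex appears before its parent, so no vertex appears before any of its children. Lemma~\ref{nofill} then yields that this ordering produces no fill in the elimination game.

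I do not anticipate any real obstacle: once the correspondence between Prim's insertion order and the rooted-tree parent relation is spelled out, the corollary follows in one line from Lemma~\ref{nofill}. The only point worth stating carefully is that self-loops (cell–substrate terms) are handled by the same argument, since they do not create off-diagonal fill when a leaf is eliminated. Thus the proof will essentially consist of the rooting argument above followed by a citation of Lemma~\ref{nofill}.
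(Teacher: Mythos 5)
Your argument is correct and is exactly the intended one: the paper leaves this corollary as an immediate consequence of Lemma~\ref{nofill}, relying on the same observation that Prim's algorithm attaches each new vertex to an already-inserted parent, so the reversed insertion order eliminates every vertex before its parent and hence after all its children. Nothing further is needed.
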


For augmented trees, reducing fill is far less simple. Reversing the traversal order of Prim's algorithm can yield very poor results. For example, if the added edges are between vertices that occur late in the traversal order, then lots of unnecessary fill is created. Of course, vertices that are not the ancestor of (we say a vertex is its own ancestor) an endpoint of an added edge can be eliminated as before without producing any fill. The rest of the vertices can be eliminated in the order specified by a fill-reducing algorithm. Both GENMMD and METIS are tested in \cite{chen2003vaidya}, but it is worth mentioning that Chen and Toledo only work with matrices whose underlying graphs are regular meshes. Cell-based models tend to produce more irregularities in their graph structure. A variety of algorithms like reverse Cuthill-McKee may also perform well \cite{duff1989}.

\subsection{Solving Preconditioners}

Matrix inverses are seldom computed explicitly. Instead, large matrices are decomposed into the product of diagonal and triangular matrices in which solving systems is easy. For a symmetric positive definite matrix $M$, the Cholesky decomposition finds a lower triangular matrix $L$ such that $M = LL^T$. We instead opt for the similar $LDL^T$ decomposition in which $D$ is a diagonal matrix and $L$ has only ones on its diagonal. This is preferable for block matrices because it avoids computing matrix square roots. This section provides a general decomposition algorithm for symmetric positive definite block matrices and adapts it to the special case of nonsingular block Laplacians of trees.

\begin{algorithm}
    \caption{\\It is worth restating this algorithm because we cannot take commutativity for granted as is often done.}
    \label{alg:ldlt}
    \begin{algorithmic}[1]
    \Function{LDLT}{$A$: $n \times n$ symmetric positive definite block matrix}
    \State $L, D \gets$ $n \times n$ block matrices
    \For{$i \gets 1, n$} \qquad$\triangleright$ Current column
        \State $X \gets 0$
        \For{$j \gets 1, \ldots, i-1$}
            \State $X \gets X + \mat{L_{ij}D_{jj}L_{ij}}$
        \EndFor
        \State $\mat{D_{ii}} \gets \mat{A_{ii}} - X$
        \State $\mat{L_{ii}} \gets I$
        \For{$j \gets i+1, \ldots, n$} \qquad$\triangleright$ Current row
            \State $Y \gets 0$
            \For{$k \gets 1, \ldots, i-1$}
                \State $Y \gets Y + \mat{L_{ik}D_{kk}L_{jk}}$ \qquad$\triangleright$ Subtract previous outer products
            \EndFor
            \State $\mat{L_{ji}} \gets (\mat{A_{ji}} - Y)\mat{D^{-1}_{ii}}$
        \EndFor
    \EndFor
    \State\Return $(L, D)$
    \EndFunction
    \end{algorithmic}
\end{algorithm}

The general $LDL^T$ algorithm takes $\mathcal{O}(n^3)$ time, but we show that the preconditioners we want to factor are sparse and only take linear or near-linear time. Another perk of this algorithm is that $L$ can be calculated in-place because previous entries in $A$ are not reused.

\begin{algorithm}
    \caption{}
    \label{alg:treeldlt}
    \begin{algorithmic}[1]
    \Function{TreeLDLT}{$A$: nonsingular block Laplacian of a rooted tree}
    \State $L, D \gets$ $n \times n$ block matrices
    \ForAll{vertices $i$ in decreasing order of distance to the root}
        \State $X \gets 0$
        \ForAll{children $j$ of $i$}
            \State $X \gets X + \mat{L_{ij}D_{jj}L_{ij}}$
        \EndFor
        \State $\mat{D_{ii}} \gets \mat{A_{ii}} - X$
        \State $\mat{L_{ii}} \gets 1$
        \State $j \gets$ parent of $i$
        \State $\mat{L_{ji}} \gets \mat{A_{ji}D^{-1}_{ii}}$
    \EndFor
    \State\Return $(L, D)$
    \EndFunction
    \end{algorithmic}
\end{algorithm}

\begin{theorem}
    Algorithm \ref{alg:treeldlt} returns the $LDL^T$ factorization of the nonsingular block Laplacian of a matrix-weighted tree in $\mathcal{O}(n)$ time. The sparsity pattern of $L$ is the same as the lower triangle of the original matrix.
\end{theorem}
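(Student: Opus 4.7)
The plan is to show that under the chosen elimination ordering, Algorithm~\ref{alg:treeldlt} is exactly the specialization of Algorithm~\ref{alg:ldlt} when the input is the block Laplacian of a tree; correctness and the sparsity claim then follow from uniqueness of the $LDL^T$ factorization together with Lemma~\ref{nofill}. First, I would observe that processing vertices in decreasing order of distance from the root eliminates every vertex before its parent, so by Lemma~\ref{nofill} no fill is created. Consequently, the sparsity pattern of $L$ equals the lower triangle of $A$: for $j>i$, the block $\mat{L_{ji}}$ can be nonzero only when $j$ is the parent of $i$, and for $j<i$, $\mat{L_{ij}}$ can be nonzero only when $j$ is a child of $i$.

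Next, I would specialize the two inner loops of Algorithm~\ref{alg:ldlt}. In the diagonal update
\[
X=\sum_{j<i}\mat{L_{ij}}\,\mat{D_{jj}}\,\mat{L_{ij}}^T,
\]
the no-fill observation restricts the sum to the children of $i$, matching the children loop in Algorithm~\ref{alg:treeldlt}. In the off-diagonal update for $\mat{L_{ji}}$ with $j>i$, only $j=\mathrm{parent}(i)$ yields a nonzero entry, so only one assignment per vertex is required. The hard part, and the only step that I expect to require any real argument, is showing that the cross-correction
\[
Y=\sum_{k<i}\mat{L_{ik}}\,\mat{D_{kk}}\,\mat{L_{jk}}^T
\]
vanishes when $j=\mathrm{parent}(i)$: a nonzero term would need $k$ to be simultaneously a child of $i$ and of $j$, but in a tree every vertex has a unique parent, so no such $k$ can exist when $i\neq j$. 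With $Y=0$, the update collapses to $\mat{L_{ji}}=\mat{A_{ji}}\,\mat{D_{ii}}^{-1}$, exactly as written in Algorithm~\ref{alg:treeldlt}.

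Two bookkeeping items remain. For well-definedness of $\mat{D_{ii}}^{-1}$ at each step, I would invoke the standard fact that the diagonal blocks produced by $LDL^T$ are the leading diagonal blocks of successive Schur complements of $A$; since $A$ is a nonsingular block Laplacian, Lemma~\ref{blapposdef} makes it positive definite, and every Schur complement is again positive definite, so each $\mat{D_{ii}}$ is invertible. For the running time, the work at vertex $i$ is $O(1+c_i)$ block operations, where $c_i$ is its number of children; summing over all vertices yields $\sum_i(1+c_i)=2n-1=O(n)$, since a tree on $n$ vertices has $n-1$ edges.
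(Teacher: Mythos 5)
Your proposal is correct and follows essentially the same route as the paper: specialize Algorithm~\ref{alg:ldlt} to the tree elimination ordering and show the cross-correction term $Y$ vanishes because no vertex can be a child of two distinct vertices. The only organizational difference is that you obtain the sparsity pattern of $L$ up front from Lemma~\ref{nofill}, whereas the paper establishes the same fact by an inline induction on the elimination order; your added remarks on the invertibility of the blocks $\mat{D_{ii}}$ via Schur complements and the $\mathcal{O}(n)$ operation count are correct details the paper leaves implicit.
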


\begin{proof}
    We assume the validity of Algorithm \ref{alg:ldlt} and prove its equivalence to Algorithm \ref{alg:treeldlt} in this special case. Say that the rows and columns of $A$ are ordered as specified on line 3 of Algorithm \ref{alg:treeldlt}, so $i < j$ means that vertex $i$ is at least as far from the root as vertex $j$. In this case, we say that $i$ is younger than $j$ and that $j$ is older than $i$. Note that vertices $i$ and $j$ being connected is equivalent to $\mat{A_{ij}}$ being nonzero. Lines 5--7 in both algorithms behave identically because the only vertices younger than $i$ that are also connected to $i$ are its children.

    We want to show the equivalence of lines 10--16 in Algorithm \ref{alg:ldlt} to lines 10--11 in Algorithm \ref{alg:treeldlt}. We prove by induction on $i$ that, in Algorithm \ref{alg:ldlt}, $\mat{L_{ji}}$ is nonzero only if $i$ is connected to $j$ for all $j > i$. The hypothesis holds for the first vertex because $Y$ is guaranteed to be zero as there are no younger vertices. Now assume that the hypothesis holds for all vertices younger than some $i$. If $Y$ is nonzero, then there exists an older vertex $j$ and a younger vertex $k$ such that $k$ is connected to both $i$ and $j$. However, the only vertex older than $k$ that it is connected to is its unique parent. This implies that $i = j$, which contradicts line 10. Therefore, $Y$ is zero and the hypothesis is true, meaning the only $j$ we need to consider is the parent of $i$.
\end{proof}

When there is zero fill, no additional memory allocation is required for $L$ even when using a matrix-free or sparse matrix implementation. However, this is not the case for augmented MST preconditioners. Algorithm \ref{alg:treeldlt} is no longer valid and extra fill entries need to be stored for a complete factorization. Other options for solving systems in the augmented MST preconditioner include using an incomplete factorization with a fill-reducing algorithm like one of those mentioned in Section \ref{subsec:elimination}, or making a nested call to the conjugate gradient method with the standard MST as the preconditioner.

This begs the question of whether there exists an effective augmentation strategy with which complete factorizations do not create fill. This restricts us to only add back edges between siblings in the MST. However, we suspect that this would not yield promising results for reasons we formalize in the next section. Intuitively, we want to add back edges that drastically reduce the distance between pairs of vertices (i.e., reduce the stretch), but sibling edges do a poor job of this.

Another benefit of the standard MST preconditioner is that solving systems in the decomposed block Laplacian takes linear time. We provide algorithms for this in Section \ref{sec:treesolve}. Finally, it is known that block $LU$ factorization for symmetric positive definite
matrices is stable as long as the matrix is well-conditioned \cite{demmel1995stability,Higham2002}.

\section{Block-Structured Support Graph Theory}\label{sec:convergence}

The existing literature on support graphs focuses entirely on symmetric diagonally dominant matrices. Here, we generalize a sequence of lemmas from \cite{bern2006support} and \cite{Gremban1996} to work with block Laplacians as well, and we refer the reader to Appendix~\ref{app:posdefoff} for further generalizations to matrices with positive definite off-diagonal blocks. We provide proofs where they differ from the non-block case. The goal is to show that both of Vaidya's preconditioners achieve a minimum eigenvalue of at least 1, and that the condition number is $\mathcal{O}(\kappa mn)$ with an MST preconditioner and $\mathcal{O}(\kappa n^2/t^2)$ with an augmented MST preconditioner, where $\kappa$ is the maximum condition number of all the edge weights and we assume sufficient sparsity.

We use $\succeq$ to represent the Loewner order. For matrices $A$ and $B$, we say $A \succeq B$ if and only if $A - B$ is positive semidefinite. A fact that we use without proof is that the Loewner order is a partial order on symmetric matrices. We denote the set of finite generalized eigenvalues of a pair of matrices $(A, B)$ by $\lambda(A, B)$. That is to say, $\lambda(A, B)$ is the set of numbers $\lambda$ such that there exists a vector $\vec{x}$ where $A\vec{x} = \lambda B\vec{x}$. If $B$ is a preconditioner for $A$, then the condition number of the preconditioned system $B^{-1}A$ is precisely the ratio of the extremal finite generalized eigenvalues $\lambda_{\mathrm{max}}(A, B)/\lambda_{\mathrm{min}}(A, B)$.

We start by defining the support of a pair of matrices, which bounds the maximum eigenvalue of the pair.
\begin{definition}
    The support of a pair of matrices $(A, B)$ is
    \[
        \sigma(A, B) = \min\{\tau \mid \tau B \succeq A\}.
    \]
    If no such $\tau$ exists, then we say $\sigma(A, B) = \infty$.
\end{definition}

\begin{lemma}
    Suppose $A$ and $B$ are positive semidefinite matrices. Then
    \[
        \lambda_{\mathrm{max}}(A, B) \leq \sigma(A, B),
    \]
    and equality holds when the support is finite.
\end{lemma}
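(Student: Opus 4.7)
The approach is to prove the inequality by directly testing the constraint $\tau B \succeq A$ against a generalized eigenvector, and then, for the equality case, argue that the optimal $\tau$ must place $\tau B - A$ on the boundary of the positive semidefinite cone.

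First I would observe that the feasible set $\{\tau \in \R : \tau B \succeq A\}$ is closed (as the preimage of the closed PSD cone under a continuous map) and unbounded above, so whenever it is nonempty the minimum $\tau^\star = \sigma(A,B)$ is attained. For any finite generalized eigenvalue $\lambda \in \lambda(A,B)$ pick a witness $\vec{x} \neq 0$ with $A\vec{x} = \lambda B\vec{x}$; finiteness of $\lambda$ forces $B\vec{x} \neq 0$, and since $B \succeq 0$ this gives $\vec{x}^T B\vec{x} > 0$. Pairing the relation $\tau^\star B - A \succeq 0$ with $\vec{x}$ yields $(\tau^\star - \lambda)\vec{x}^T B\vec{x} \geq 0$, hence $\lambda \leq \tau^\star$. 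Maximizing over $\lambda$ proves $\lambda_{\mathrm{max}}(A,B) \leq \sigma(A,B)$; when $\sigma(A,B) = \infty$ the inequality is vacuous.

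For the equality direction, the key idea is that $\tau^\star B - A$ cannot be strictly positive on the ``relevant'' subspace, or else $\tau^\star$ would not be minimal. Set $N = \ker A \cap \ker B$; every $\vec{v} \in N$ lies in $\ker(\tau B - A)$ for every $\tau$, so these vectors are irrelevant to the definiteness question. Suppose for contradiction that $\tau^\star B - A$ is strictly positive definite on $N^\perp$. By compactness of the unit sphere in $N^\perp$, the quadratic form $\vec{x}^T(\tau^\star B - A)\vec{x}$ is bounded below by some $c > 0$ on that sphere, while $\vec{x}^T B \vec{x}$ is bounded above; for $\epsilon > 0$ small enough, $(\tau^\star - \epsilon)B - A$ therefore remains PSD on $N^\perp$, and since it vanishes identically on $N$ it is PSD on all of $\R^n$, contradicting minimality of $\tau^\star$. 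Hence there exists $\vec{x} \in N^\perp \setminus \{0\}$ with $A\vec{x} = \tau^\star B\vec{x}$.

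It remains to verify that this $\vec{x}$ yields a \emph{finite} generalized eigenvalue. If $B\vec{x} = 0$, then also $A\vec{x} = \tau^\star B\vec{x} = 0$, placing $\vec{x}$ in $N \cap N^\perp = \{0\}$, a contradiction. So $B\vec{x} \neq 0$, and $\tau^\star \in \lambda(A,B)$, giving $\lambda_{\mathrm{max}}(A,B) \geq \sigma(A,B)$; combined with the first half, equality follows. I expect the main obstacle to be exactly this last sub-step: ensuring that the boundary kernel vector produced by the minimality argument is not absorbed by the common nullspace of $A$ and $B$, which is why one must pass to $N^\perp$ rather than argue on all of $\R^n$.
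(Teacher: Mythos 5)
Your argument is correct and is essentially the standard Rayleigh-quotient/boundary argument; the paper itself does not prove this lemma but defers to Gremban's Lemma~4.4, which proceeds the same way, so you have in effect supplied the missing proof rather than diverged from it. One small point worth making explicit: your step ``finiteness of $\lambda$ forces $B\vec{x}\neq 0$'' silently excludes witnesses $\vec{x}\in\ker A\cap\ker B$, for which the paper's literal definition would admit \emph{every} $\lambda$ as a generalized eigenvalue; you should state the convention that such indeterminate vectors are not counted (equivalently, restrict witnesses to $B\vec{x}\neq 0$), after which both halves of your proof, including the careful passage to $N^{\perp}$ and the use of $\vec{x}^T M\vec{x}=0\Rightarrow M\vec{x}=0$ for $M\succeq 0$, go through.
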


\begin{proof}
    See \cite[Lemma 4.4]{Gremban1996}
\end{proof}

Since $\lambda_{\mathrm{max}}(B, A) = \lambda_{\mathrm{min}}(A, B)^{-1}$, the supports $\sigma(A, B)$ and $\sigma(B, A)$ are all we need to bound the condition number $\kappa(B^{-1}A)$. In fact, we already have the necessary tools to bound the minimum eigenvalue.

\begin{theorem}
    Let $G = (V, E, w)$ be a matrix-weighted graph with subgraph $H = (V, F, w')$ and block Laplacians $L_G$ and $L_H$. Then $\lambda_{\mathrm{min}}(L_G, L_H) \geq 1$.
\end{theorem}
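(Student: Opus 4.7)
The plan is to reduce the claim to the identity $\sigma(L_H, L_G) \le 1$ via the previous lemma, and then establish this supporting inequality by exhibiting $L_G - L_H$ itself as a block Laplacian, so that positive semidefiniteness is immediate from Lemma~\ref{blapposdef}.

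First I would invoke the preceding lemma applied to the pair $(L_H, L_G)$: since both are positive semidefinite (Lemma~\ref{blapposdef}), we have $\lambda_{\mathrm{max}}(L_H, L_G) \le \sigma(L_H, L_G)$. Next, I would argue that for any finite generalized eigenvalue $\lambda$ of $(L_G, L_H)$ with eigenvector $\vec{x}$, if $\lambda \ne 0$ then $\lambda^{-1}$ is a finite generalized eigenvalue of $(L_H, L_G)$, so upper bounds on the latter translate to lower bounds on the former. The case $\lambda = 0$ requires a small observation: from $L_G \succeq L_H \succeq 0$ (which I prove below) together with positive semidefiniteness, $L_G \vec{x} = 0$ forces $\vec{x}^T L_H \vec{x} = 0$ and hence $L_H \vec{x} = 0$, making the pair $(0, 0)$, which is not a finite generalized eigenvalue. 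So it suffices to prove $\sigma(L_H, L_G) \le 1$, i.e.\ $L_G \succeq L_H$.

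The core step is then to verify $L_G - L_H \succeq 0$ by identifying it with a block Laplacian. Working block-by-block using the definition, the off-diagonal block at position $(i, j)$ with $i \ne j$ is $-w(i,j) + w'(i,j)$, which vanishes for $(i,j) \in F$ (where $w' = w$) and equals $-w(i,j)$ for $(i,j) \in E \setminus F$. The diagonal block at $(i,i)$ telescopes to
\[
    \bigl(w(i,i) - w'(i,i)\bigr) + \sum_{k : (i,k) \in E \setminus F} w(i,k),
\]
which is positive semidefinite because each summand is. Thus $L_G - L_H$ is exactly the block Laplacian of the matrix-weighted graph $(V, E \setminus F, w|_{E \setminus F})$ (with the self-loop contribution $w(i,i) - w'(i,i)$ absorbed into the diagonal), and Lemma~\ref{blapposdef} yields $L_G - L_H \succeq 0$.

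The main obstacle is essentially bookkeeping: ensuring that the subgraph convention ($w'$ agrees with $w$ on retained edges and self-loops, or at most is dominated by $w$ in the Loewner order) is used consistently, and handling the zero-eigenvalue case when converting a bound on $\lambda_{\mathrm{max}}(L_H, L_G)$ into one on $\lambda_{\mathrm{min}}(L_G, L_H)$. Once $L_G - L_H$ is recognized as a block Laplacian, the rest follows from results already established in the paper.
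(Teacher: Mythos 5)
Your proposal is correct and follows essentially the same route as the paper: both identify $L_G - L_H$ as the block Laplacian of the graph on the removed edges $(V, E\setminus F, w - w')$, invoke Lemma~\ref{blapposdef} to get $L_G \succeq L_H$, and convert $\sigma(L_H, L_G) \le 1$ into the eigenvalue bound. Your extra care with the zero-eigenvalue case and the subgraph weight convention is sound but not a different argument --- the paper already records the identity $\lambda_{\mathrm{max}}(B,A) = \lambda_{\mathrm{min}}(A,B)^{-1}$ just before the theorem.
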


\begin{proof}
    Observe that $L_G = L_H + L_K$ where $L_K$ is the block Laplacian of the graph $K = (V, E \setminus F, w - w')$. Since $L_G - L_H = L_K$ is a block Laplacian, it is positive semidefinite. This implies that $\sigma(L_H, L_G) \leq 1$ and that $\lambda_{\mathrm{min}}(L_G, L_H) \geq 1$.
\end{proof}

Bounding the maximum eigenvalue requires more effort. To simplify computing the support of a matrix and preconditioner, we use the following lemma to decompose the matrices into sums of positive semidefinite matrices.

\begin{lemma}
    \label{maxsupp}
    Let $A = A_1 + \cdots + A_k$ and $B = B_1 + \cdots + B_k$ where each $A_i$ and $B_i$ is positive semidefinite. Then
    \[
        \sigma(A, B) \leq \max_i\{\sigma(A_i, B_i)\}.
    \]
\end{lemma}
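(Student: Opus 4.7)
The plan is a short, direct argument from the definition of support, exploiting only that the positive semidefinite cone is closed under addition and under nonnegative scalar multiplication. First I would dispose of the degenerate case: if any $\sigma(A_i, B_i) = \infty$, then $\max_i \sigma(A_i, B_i) = \infty$ and the inequality is vacuous, so assume all the individual supports are finite.

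Set $\tau = \max_i \sigma(A_i, B_i) < \infty$. For each $i$, the definition of support (together with the fact that the PSD cone is closed, so the infimum defining $\sigma$ is attained) gives $\sigma(A_i, B_i) B_i \succeq A_i$. Since $\tau - \sigma(A_i, B_i) \geq 0$ and $B_i \succeq 0$, we have $(\tau - \sigma(A_i, B_i)) B_i \succeq 0$, and adding this to the previous relation yields
\[
    \tau B_i \succeq A_i \quad \text{for every } i.
\]

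Next I would sum these $k$ inequalities. Because the Loewner order is preserved under addition of positive semidefinite matrices (it is a partial order compatible with the additive structure of the PSD cone), we obtain
\[
    \tau B = \tau \sum_{i=1}^k B_i = \sum_{i=1}^k \tau B_i \succeq \sum_{i=1}^k A_i = A.
\]
Hence $\tau$ lies in the set $\{t \mid tB \succeq A\}$, so by definition $\sigma(A, B) \leq \tau = \max_i \sigma(A_i, B_i)$, which is the claim.

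There is no real obstacle here; the only point that merits a sentence of care is the attainment of the infimum in the definition of $\sigma$, which is why the inequality $\sigma(A_i, B_i) B_i \succeq A_i$ is legitimate rather than only holding in the limit. Every other step is monotonicity of scaling and additivity of the PSD cone, both of which are immediate.
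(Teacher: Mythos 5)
Your proof is correct, and it is essentially the standard argument (the one in Gremban's Lemma 4.7, which the paper cites in lieu of giving a proof): scale each inequality $\sigma(A_i,B_i)B_i \succeq A_i$ up to the common constant $\tau$ using $B_i \succeq 0$, then sum, using closure of the PSD cone under addition. Your remark about attainment of the minimum is the right point of care, and since the paper defines $\sigma$ with a genuine $\min$ over the closed set $\{\tau \mid \tau B \succeq A\}$, the step is legitimate exactly as you argue.
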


\begin{proof}
    See \cite[Lemma 4.7]{Gremban1996}
\end{proof}

A further simplification we can make is to only consider block Laplacians with zero block row sums. This means that we can ignore cell-substrate friction in the rest of the analysis using the following lemma.

\begin{lemma}
    Let $A$ be a block Laplacian and define $A'$ to be the matrix with the same off-diagonal blocks as $A$ and zero block row sums. Let $B'$ be a block matrix and $B = B' + A - A'$. If $\beta B' \succeq A'$ for some $\beta \geq 1$, then $\beta B \succeq A$. Similarly, if $\alpha A' \succeq B'$ for some $\alpha \geq 1$, then $\alpha A \succeq B$.
\end{lemma}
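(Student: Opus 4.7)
The plan is to observe that the hypothesis isolates the ``off-diagonal part'' of the Loewner comparison, and that the ``row-sum part'' $A - A'$ is automatically positive semidefinite, so the two ingredients can be recombined additively with the right coefficients.

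First I would unpack what $A - A'$ looks like. Since $A'$ shares the off-diagonal blocks of $A$, the matrix $A - A'$ is block diagonal. Its $i$-th diagonal block equals the $i$-th block row sum of $A$ (because the corresponding row sum of $A'$ is zero by construction). By the definition of a block Laplacian, every block row sum of $A$ is positive semidefinite. Hence $A - A' \succeq 0$. This is the only place where the block Laplacian hypothesis on $A$ is used.

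Next I would perform the algebraic decomposition. Substituting $B = B' + A - A'$,
\begin{align*}
\beta B - A &= \beta B' + \beta(A - A') - A \\
            &= \bigl(\beta B' - A'\bigr) + (\beta - 1)(A - A').
\end{align*}
The first summand is PSD by hypothesis, and the second is PSD because $\beta \geq 1$ and $A - A' \succeq 0$. A sum of PSDs is PSD, so $\beta B \succeq A$, giving the first implication. For the second direction, the same trick gives
\begin{align*}
\alpha A - B &= \alpha A - B' - (A - A') \\
             &= \bigl(\alpha A' - B'\bigr) + (\alpha - 1)(A - A'),
\end{align*}
and the identical PSD argument yields $\alpha A \succeq B$.

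There is no real obstacle here; the only subtlety is recognizing that the condition $\beta, \alpha \geq 1$ is precisely what is needed to absorb the additional $A - A'$ term with the correct sign. The block-Laplacian hypothesis enters only through the single fact that row sums are positive semidefinite, which is built into the definition given earlier.
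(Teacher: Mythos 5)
Your proof is correct and is exactly the standard argument for this lemma (the paper itself defers to \cite[Lemma 2.5]{bern2006support}, whose proof is the same decomposition $\beta B - A = (\beta B' - A') + (\beta-1)(A-A')$, transplanted here to the block setting). The key observations --- that $A - A'$ is block diagonal with blocks equal to the positive semidefinite block row sums of $A$, and that $\beta,\alpha \ge 1$ absorb this surplus term --- are precisely what the cited scalar proof uses, so your write-up serves as a valid self-contained block-structured version.
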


\begin{proof}
    See \cite[Lemma 2.5]{bern2006support}
\end{proof}

The lower bound on $\alpha$ and $\beta$ is not important because the pairs of matrices with which we are concerned have supports of at least 1. Next, we prove a small lemma that is helpful in the rest of the section.

\begin{lemma}
    \label{posdefloewner}
    Let $A$ be a symmetric positive semidefinite matrix. Then
    \[
        \lambda_{\mathrm{max}}(A)I \succeq A \succeq \lambda_{\mathrm{min}}(A)I.
    \]
\end{lemma}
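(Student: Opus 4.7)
The plan is to reduce everything to the spectral theorem. Since $A$ is symmetric, there exists an orthogonal matrix $Q$ and a diagonal matrix $\Lambda = \operatorname{diag}(\lambda_1, \ldots, \lambda_n)$, whose entries are the eigenvalues of $A$, such that $A = Q\Lambda Q^T$. Writing the two target differences as
\[
    \lambda_{\mathrm{max}}(A)I - A = Q\lb \lambda_{\mathrm{max}}(A)I - \Lambda\rb Q^T, \qquad A - \lambda_{\mathrm{min}}(A)I = Q\lb \Lambda - \lambda_{\mathrm{min}}(A)I\rb Q^T,
\]
reduces both inequalities to showing that the bracketed diagonal matrices are positive semidefinite, which is immediate since each diagonal entry is of the form $\lambda_{\mathrm{max}}(A) - \lambda_i \geq 0$ or $\lambda_i - \lambda_{\mathrm{min}}(A) \geq 0$.

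First I would record the elementary observation that congruence by an orthogonal matrix preserves the Loewner order: for any symmetric $M$ and any vector $\vec{x}$, we have $\vec{x}^T Q M Q^T \vec{x} = (Q^T\vec{x})^T M (Q^T\vec{x})$, so $M \succeq 0$ implies $Q M Q^T \succeq 0$. Then I would apply this to the two diagonal matrices above. Since a diagonal matrix with nonnegative entries is positive semidefinite (as a sum of rank-one positive semidefinite matrices $\lambda_i \vec{e}_i \vec{e}_i^T$), both conclusions follow.

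There is no substantive obstacle here; the statement is a direct consequence of the spectral theorem and the orthogonal invariance of positive semidefiniteness. The only point worth being explicit about is that we are using the fact that $A$ is symmetric (hence orthogonally diagonalizable with real eigenvalues), which is built into the hypothesis that $A$ is symmetric positive semidefinite.
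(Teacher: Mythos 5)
Your proof is correct and follows essentially the same route as the paper's: diagonalize $A$ and observe that the shifted diagonal factor has nonnegative entries. If anything, your version is slightly more careful, since you explicitly use an orthogonal diagonalization and note that congruence preserves positive semidefiniteness, whereas the paper writes the similarity $P(\lambda I - D)P^{-1}$ without remarking that $P$ must be orthogonal for the definiteness conclusion to transfer.
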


\begin{proof}
    Since $A$ is symmetric positive semidefinite, it is diagonalizable and we can write $A = PDP^{-1}$ where $D$ is the diagonal matrix whose entries are the eigenvalues of $A$. For concision, let $\lambda = \lambda_{\mathrm{max}}(A)$. Observe
    \[
        \lambda I - A = \lambda I - PDP^{-1} = P(\lambda I - D)P^{-1}.
    \]
    By the definition of $\lambda$, the middle factor of $\lambda I - D$ has all nonnegative entries, so the whole difference is positive semidefinite. Similar logic applies for the minimum eigenvalue.
\end{proof}

Now we are ready to prove the main result. We decompose a matrix and support graph preconditioner into sums of the block Laplacians of individual edges and paths. Then we analyze their pairwise supports with the following three lemmas.

\begin{lemma}
    \label{dilation_same_blocks}
    Suppose $A$ and $B$ are symmetric positive definite matrices. Let
    \[
        \hat{A} = \begin{pmatrix}
            A & 0 & \cdots & 0 & -A \\
            0 & 0 & & & 0 \\
            \vdots & & \ddots & & \vdots \\
            0 & & & 0 & 0 \\
            -A & 0 & \cdots & 0 & A
        \end{pmatrix},
    \quad\mbox{and}\quad
        \hat{B} = \begin{pmatrix}
            A & -A & & & \\
            -A & 2A & -A & & \\
            & & \ddots & & \\
            & & -A & 2A & -A & \\
            & & & -A & A
        \end{pmatrix},
    \]
    be $(k+1) \times (k+1)$ block matrices. Then $k\hat{B} \succeq \hat{A}$.
\end{lemma}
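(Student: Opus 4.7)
The plan is to reduce the Loewner comparison $k\hat{B} \succeq \hat{A}$ to a scalar inequality by writing out the quadratic forms explicitly. Both $\hat{A}$ and $\hat{B}$ are block Laplacians of matrix-weighted graphs on the same $k+1$ vertices: $\hat{A}$ corresponds to the single edge $\{1,k+1\}$ with weight $A$, and $\hat{B}$ corresponds to the path $1-2-\cdots-(k+1)$ where every edge has weight $A$. By the standard Laplacian identity, for any block vector $\vec{x} = (\mat{\vec{x}_1},\ldots,\mat{\vec{x}_{k+1}})^T$ partitioned conformally with $A$,
\[
    \vec{x}^T \hat{A}\vec{x} = (\mat{\vec{x}_1}-\mat{\vec{x}_{k+1}})^T A\,(\mat{\vec{x}_1}-\mat{\vec{x}_{k+1}}),
    \qquad
    \vec{x}^T \hat{B}\vec{x} = \sum_{i=1}^{k}(\mat{\vec{x}_i}-\mat{\vec{x}_{i+1}})^T A\,(\mat{\vec{x}_i}-\mat{\vec{x}_{i+1}}).
\]
So it suffices to show $k\,\vec{x}^T\hat{B}\vec{x} \ge \vec{x}^T\hat{A}\vec{x}$ pointwise.

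Next I would introduce the edge differences $\vec{y}_i := \mat{\vec{x}_i}-\mat{\vec{x}_{i+1}}$, so that $\mat{\vec{x}_1}-\mat{\vec{x}_{k+1}} = \sum_{i=1}^k \vec{y}_i$ telescopes along the path. The target inequality becomes
\[
    k\sum_{i=1}^{k}\vec{y}_i^T A\,\vec{y}_i \;\ge\; \Bigl(\sum_{i=1}^{k}\vec{y}_i\Bigr)^T A\,\Bigl(\sum_{i=1}^{k}\vec{y}_i\Bigr).
\]
Since $A$ is symmetric positive definite, $\langle u,v\rangle_A := u^T A v$ is a genuine inner product with induced norm $\|u\|_A := \sqrt{u^T A u}$. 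The right-hand side is $\|\sum_i \vec{y}_i\|_A^2$, and by the triangle inequality followed by the standard Cauchy-Schwarz bound $\bigl(\sum_{i=1}^k a_i\bigr)^2 \le k\sum_{i=1}^k a_i^2$ applied to $a_i = \|\vec{y}_i\|_A$, one obtains
\[
    \Bigl\|\sum_{i=1}^{k}\vec{y}_i\Bigr\|_A^2 \;\le\; \Bigl(\sum_{i=1}^{k}\|\vec{y}_i\|_A\Bigr)^2 \;\le\; k\sum_{i=1}^{k}\|\vec{y}_i\|_A^2,
\]
which is exactly the desired inequality.

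The only subtlety, and the main thing to get right, is ensuring that the use of the $A$-inner product is legitimate in the block setting; this requires $A$ to be symmetric positive definite, which is given in the hypothesis. Since the inequality holds for every block vector $\vec{x}$, we conclude $k\hat{B} \succeq \hat{A}$. No additional case analysis or induction on $k$ is needed; the path-length factor $k$ enters precisely through the Cauchy-Schwarz step, reflecting the intuition that longer support paths give weaker preconditioners.
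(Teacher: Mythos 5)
Your proof is correct, but it takes a genuinely different route from the paper's. The paper follows Bern et al.\ and establishes $k\hat{B}-\hat{A}\succeq 0$ by an inductive, block-structured symmetric Gaussian elimination that reduces the difference to a block-diagonal matrix with nonnegative pivots. You instead argue directly on the quadratic forms: both matrices are block Laplacians (a single edge of weight $A$ versus a path of $k$ edges each of weight $A$), the edge differences $\vec{y}_i$ telescope to $\mat{\vec{x}_1}-\mat{\vec{x}_{k+1}}$, and the inequality
\[
    \Bigl\| \sum_{i=1}^{k}\vec{y}_i \Bigr\|_A^2 \;\le\; k\sum_{i=1}^{k}\|\vec{y}_i\|_A^2
\]
follows from the triangle inequality and Cauchy--Schwarz, which are legitimate in the $A$-inner product precisely because $A$ is symmetric positive definite --- the one subtlety you correctly flag. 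Your argument is shorter and more self-contained: it needs no induction, it makes the origin of the factor $k$ transparent (Cauchy--Schwarz over the $k$ telescoping terms), it extends with no extra work to paths whose edge weights are arbitrary positive multiples of $A$, and it sidesteps the paper's somewhat delicate passage from $C$ to $\mathrm{diag}(A^{-1})C$, which is not a congruence as written. What the paper's elimination approach buys is a computation that parallels the scalar proof it cites and exhibits the explicit diagonal pivots reused in the subsequent congestion--dilation development.
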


\begin{proof}
    Let $C = k\hat{B} - A$ and define $\hat{C} = \mathrm{diag}(A^{-1}) C$ which is
    a block-structured matrix in which the blocks are either nonzero or a multiple
    of the identity matrix. Since $\mathrm{diag}(A^{-1})$ is positive definite, it follows
    that $C$ is semi-positive definite if $\hat{C}$ is positive semidefinite.
    We prove that $\hat{C}$ is positive semidefinite
    using induction as in the non-block-structured
    case in \cite[Lemma 2.7]{bern2006support}. The only difference is that we use a
    block-structured symmetric Gaussian elimination, meaning the $i$-th
    elimination step is
    \[
        \hat{C}_i = E_{i} \hat{C}_{i - 1} E_{i}^T
    \]
    where $E_i$ is block-structured with identity blocks along its diagonal and
    two nonzero off-diagonal blocks:
    \[
        \mat{E_{1i}} = \frac{1}{1 + i} I,\ \mat{E_{(i+1)(i)}} = \frac{1}{i + 1} I.
    \]
    At completion of this process we obtain the matrix
    \[
        \hat{C} = \mathrm{diag}\lb 0, 2kI, \frac{3k}{2}I, \dots, \lb\frac{i+1}{i}\rb kI, \dots, 0 \rb.
    \]
    Since the matrix $\hat{C}$ has nonnegative values on its diagonal, this shows that
    $\hat{C}$ is positive semidefinite.
\end{proof}

\begin{lemma}
    \label{precongdil}
    Suppose $A$ and $B$ are symmetric positive definite matrices. Let
    \[
        \hat{A} = \begin{pmatrix}
            A & 0 & \cdots & 0 & -A \\
            0 & 0 & & & 0 \\
            \vdots & & \ddots & & \vdots \\
            0 & & & 0 & 0 \\
            -A & 0 & \cdots & 0 & A
        \end{pmatrix},
    \quad\mbox{and}\quad
        \hat{B} = \begin{pmatrix}
            B & -B & & & \\
            -B & 2B & -B & & \\
            & & \ddots & & \\
            & & -B & 2B & -B & \\
            & & & -B & B
        \end{pmatrix},
    \]
    be $(k+1) \times (k+1)$ block matrices. Then $k\cdot\lambda_{\mathrm{max}}(AB^{-1})\hat{B} \succeq \hat{A}$.
\end{lemma}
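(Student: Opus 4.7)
My plan is to reduce this lemma to Lemma \ref{dilation_same_blocks} by interpolating through an intermediate matrix. Define $\tilde{B}$ to be the $(k+1) \times (k+1)$ block matrix obtained from $\hat{B}$ by replacing every $B$ block with $A$ (so $\tilde{B}$ is the matrix played by $\hat{B}$ in the previous lemma). Then Lemma \ref{dilation_same_blocks} immediately yields $k\tilde{B} \succeq \hat{A}$. So it suffices to prove the Loewner ``swap'' inequality
\[
    \lambda_{\max}(AB^{-1})\, \hat{B} \succeq \tilde{B},
\]
because chaining the two gives $k\,\lambda_{\max}(AB^{-1})\, \hat{B} \succeq k\tilde{B} \succeq \hat{A}$.

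To prove the swap inequality, I would expand both quadratic forms. Both $\hat{B}$ and $\tilde{B}$ are path-graph block Laplacians (one with weight $B$, the other with weight $A$ on every edge), so for any block-partitioned vector $\vec{y}$ with block entries $\mat{\vec{y}_0}, \ldots, \mat{\vec{y}_k}$,
\[
    \vec{y}^T \hat{B} \vec{y} = \sum_{i=0}^{k-1} (\mat{\vec{y}_i} - \mat{\vec{y}_{i+1}})^T B (\mat{\vec{y}_i} - \mat{\vec{y}_{i+1}}),
\]
and $\vec{y}^T \tilde{B} \vec{y}$ is the same telescoping sum with $A$ in place of $B$. Thus the block Loewner inequality reduces term-by-term to the single-block claim $\lambda_{\max}(AB^{-1})\, B \succeq A$.

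For this last single-block claim, I would invoke the support theory already established. The eigenvalues of $AB^{-1}$ are precisely the generalized eigenvalues of the pair $(A, B)$: if $AB^{-1}\vec{x} = \lambda \vec{x}$, then $\vec{w} = B^{-1}\vec{x}$ satisfies $A\vec{w} = \lambda B \vec{w}$. Because $A$ and $B$ are positive definite, the support $\sigma(A, B)$ is finite, and by the earlier lemma relating maximum generalized eigenvalues to supports, $\sigma(A, B) = \lambda_{\max}(A, B) = \lambda_{\max}(AB^{-1})$. The defining property of support then gives $\lambda_{\max}(AB^{-1})\, B \succeq A$, closing the argument.

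The main obstacle, to the extent there is one, is purely organizational: spotting that $\hat{B}$ and $\tilde{B}$ share the same path-Laplacian block pattern so that the inequality decouples into a single scalar-style inequality per edge. Once this observation is in hand, the proof is essentially a direct concatenation of Lemma \ref{dilation_same_blocks}, the definition of support, and the identification of $\lambda_{\max}(AB^{-1})$ with $\sigma(A, B)$.
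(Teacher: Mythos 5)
Your proposal is correct and follows essentially the same route as the paper: both pass through the intermediate all-$A$ path Laplacian (your $\tilde{B}$, which the paper writes as $\mathrm{diag}(AB^{-1})\hat{B}$), apply Lemma~\ref{dilation_same_blocks} to it, and then reduce the remaining swap inequality $\lambda_{\max}(AB^{-1})\hat{B} \succeq \tilde{B}$ to the single-block fact $\lambda_{\max}(AB^{-1})\,B \succeq A$. If anything, your edge-by-edge quadratic-form expansion and support-theoretic justification of that last step is more careful than the paper's, which appeals to Lemma~\ref{posdefloewner} for the non-symmetric matrix $AB^{-1}$ without noting the needed conjugation by $B^{1/2}$.
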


\begin{proof}
    Let $\lambda = \lambda_{\mathrm{max}}(AB^{-1})$. When $A = B$,
    we have that $\lambda = 1$ and the statement is equivalent to Lemma~\ref{dilation_same_blocks}.
    When $A \neq B$, we can reduce to the case of equality by multiplying $\hat{B}$ by $\mathrm{diag}(AB^{-1})$. This yields
    \[
        k\cdot\mathrm{diag}(AB^{-1})\hat{B} \succeq \hat{A}.
    \]
    Next we show that $k\cdot\lambda\hat{B} \succeq k\cdot\mathrm{diag}(AB^{-1})\hat{B}$.
    This is equivalent to showing that $\lambda I \succeq AB^{-1}$, which is true by Lemma~\ref{posdefloewner}.
    The rest follows from the transitivity of the Loewner order.
\end{proof}

Note that, up to permutation, the block Laplacian of a single edge looks like $\hat{A}$ and that of a simple path with uniform edge weights looks like $\hat{B}$. The following congestion-dilation lemma generalizes the previous one so that $\hat{B}$ can have varied edge weights.

\begin{lemma}[congestion-dilation lemma]
    \label{congdil}
    Let
    \[
        \hat{A} = \begin{pmatrix}
            A & 0 & \cdots & 0 & -A \\
            0 & 0 & & & 0 \\
            \vdots & & \ddots & & \vdots \\
            0 & & & 0 & 0 \\
            -A & 0 & \cdots & 0 & A
        \end{pmatrix}
    \]
    and
    \[
        \hat{B} = \begin{pmatrix}
            C_1 & -B_1 & & & \\
            -B_1 & C_2 & -B_2 & & \\
            & & \ddots & & \\
            & & -B_{k-1} & C_k & -B_k & \\
            & & & -B_k & C_{k+1}
        \end{pmatrix}
    \]
    be $(k+1) \times (k+1)$ block matrices where $A$, $B_i$, and $C_i$ are symmetric positive definite for all $i$ and $\hat{B}$ has zero block row sums. Then
    \[
        k \cdot \max_i\{\lambda_{\mathrm{max}}(AB_i^{-1})\} \cdot \hat{B} \succeq \hat{A}.
    \]
\end{lemma}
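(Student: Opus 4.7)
The plan is to reduce this lemma to Lemma~\ref{precongdil}, which already handles the case of a path with uniform edge weights. The central idea is to replace the varying weights $B_1,\dots,B_k$ by a single ``worst case'' weight matrix $B^\star$ that is dominated by every $B_i$ in the Loewner order, and then to chain two Loewner inequalities.

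Concretely, set $\tau = \max_i \lambda_{\max}(AB_i^{-1})$ and define $B^\star = A/\tau$. By the definition of $\tau$ we have $\tau B_i \succeq A = \tau B^\star$ for every $i$, hence $B_i \succeq B^\star$ and in particular each $B_i - B^\star$ is positive semidefinite. Now let $\hat{B}^\star$ be the $(k+1)\times(k+1)$ path-Laplacian built exactly like $\hat{B}$ but with every edge weight replaced by $B^\star$, so its diagonal blocks are $B^\star, 2B^\star,\dots,2B^\star,B^\star$. Then $\hat{B} - \hat{B}^\star$ is itself a block Laplacian of a weighted path whose edge weights are the positive semidefinite matrices $B_i - B^\star$, so by Lemma~\ref{blapposdef} it is positive semidefinite, giving $\hat{B} \succeq \hat{B}^\star$.

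At this point the hypotheses of Lemma~\ref{precongdil} apply to the pair $(A, B^\star)$, and $\lambda_{\max}(A(B^\star)^{-1}) = \lambda_{\max}(\tau I) = \tau$, so that lemma yields $k\tau\,\hat{B}^\star \succeq \hat{A}$. Multiplying the earlier inequality $\hat{B} \succeq \hat{B}^\star$ by the positive scalar $k\tau$ and invoking transitivity of the Loewner order gives $k\tau\,\hat{B} \succeq k\tau\,\hat{B}^\star \succeq \hat{A}$, which is the claim once we recall $\tau = \max_i \lambda_{\max}(AB_i^{-1})$.

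The only step that could be viewed as an obstacle is verifying that $\hat{B} - \hat{B}^\star$ really is the block Laplacian of a legitimate matrix-weighted path so that Lemma~\ref{blapposdef} applies; the off-diagonal blocks are $-(B_i - B^\star)$ which must be negative definite (or zero), and the block row sums must be positive semidefinite. Both follow from $B_i \succeq B^\star$ and the fact that $\hat{B}^\star$ has zero block row sums by construction together with the assumption in the statement that $\hat{B}$ has zero block row sums, so the subtraction preserves this property. Once this bookkeeping is in place, no further computation is needed: the proof is a two-step Loewner chain plus the cited lemma.
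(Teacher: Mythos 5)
Your proof is correct, and it takes a genuinely different route from the paper's that is worth comparing. The paper also reduces to the uniform-weight case of Lemma~\ref{precongdil}, but it does so via an additive split $\hat{B} = \hat{B}_1 + \hat{B}_2$, where the uniform path $\hat{B}_1$ carries the weight $D = \min_i\{\lambda_{\mathrm{min}}(B_i)\}\, I$ and $\hat{B}_2$ is shown to be a positive semidefinite remainder. With that choice of comparison weight, Lemma~\ref{precongdil} delivers the constant $k\,\lambda_{\mathrm{max}}(AD^{-1}) = k\,\lambda_{\mathrm{max}}(A)/\min_i\{\lambda_{\mathrm{min}}(B_i)\}$, which by the inequality displayed just before the theorems is in general \emph{larger} than the stated constant $k\max_i\{\lambda_{\mathrm{max}}(AB_i^{-1})\}$; so the paper's decomposition directly supports only that weaker bound (which is, in fact, all its later theorems consume). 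Your comparison weight $B^{\star} = A/\tau$ is precisely the one for which the congestion produced by Lemma~\ref{precongdil} equals $\tau = \max_i\{\lambda_{\mathrm{max}}(AB_i^{-1})\}$ with no slack, and the domination $B_i \succeq B^{\star}$ --- equivalent to $\tau B_i \succeq A$, which follows from $\tau I \succeq B_i^{-1/2}AB_i^{-1/2}$ after conjugating by $B_i^{1/2}$ --- makes $\hat{B} - \hat{B}^{\star}$ a path Laplacian with positive semidefinite edge weights. Your argument therefore establishes the lemma with the sharper constant exactly as stated, which is a genuine improvement in rigor over substituting $D$ for the $B_i$. The only cosmetic issue is the one you already flag: the edge weights $B_i - B^{\star}$ may be singular, so $\hat{B} - \hat{B}^{\star}$ is a block Laplacian only under a semidefinite relaxation of the paper's definition of off-diagonal blocks; the proof of Lemma~\ref{blapposdef} goes through unchanged for negative semidefinite off-diagonal blocks (and the paper's own remainder $\hat{B}_2$ requires the same relaxation), so this is harmless.
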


\begin{proof}
    Let $D = \min_i\{\lambda_{\mathrm{min}}(B_i)\} \cdot I$. Decompose
    \begin{align*}
        \hat{B} = \hat{B}_1 + \hat{B}_2 = &\begin{pmatrix}
            D & -D & & & \\
            -D & 2D & -D & & \\
            & & \ddots & & \\
            & & -D & 2D & -D & \\
            & & & -D & D
        \end{pmatrix} \\
        &+ \begin{pmatrix}
            C_1 - D & -B_1 + D & & & \\
            -B_1 + D & C_2 - 2D & -B_2 + D & & \\
            & & \ddots & & \\
            & & -B_{k-1} + D & C_k - 2D & -B_k + D \\
            & & & -B_k + D & C_{k+1} - D
        \end{pmatrix}.
    \end{align*}
    Let $\lambda = \max_i\{\lambda_{\mathrm{max}}(AB_i^{-1})\}$ and write
    \[
        k \cdot \lambda \hat{B} - \hat{A} = (k \cdot \lambda\hat{B}_1 - \hat{A}) + (k \cdot \lambda \hat{B}_2).
    \]
    The first summand is positive semidefinite by Lemma \ref{precongdil}. The diagonal blocks of $\hat{B_2}$ are positive semidefinite and the nonzero off-diagonal blocks are negative semidefinite by Lemma~\ref{posdefloewner}. This and the fact that the block row sums are zero mean the second summand is a block Laplacian, so it is positive semidefinite.
\end{proof}

Under the support of the path represented by $\hat{B}$, we call $\lambda$ the \textit{congestion} of the edge represented by $\hat{A}$, and $k$ is its \textit{dilation}. A more concise statement of the lemma is that $\sigma(\hat{A}, \hat{B})$ is bounded above by the product of the congestion and dilation. We use the this lemma to prove an upper bound on the maximum eigenvalue of a block Laplacian with an MST preconditioner. The following proofs make use of the fact that
\[
    \frac{\lambda_{\mathrm{max}}(A)}{\min_i\{\lambda_{\mathrm{min}}(B_i)\}} \geq \max_i\{\lambda_{\mathrm{max}}(AB_i^{-1})\}.
\]

\begin{theorem}
    Let $G = (V, E, w)$ be a matrix-weighted graph and $T$ its MST weighted by minimum eigenvalues. Let $L_G$ and $L_T$ be their block Laplacians and let $\kappa$ be the maximum condition number of all the edge weights in $G$. Then
    \[
        \lambda_{\mathrm{max}}(L_G, L_T) \leq \kappa m(n-1).
    \]
\end{theorem}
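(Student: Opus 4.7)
The overall strategy is to combine an edge-by-path decomposition of $L_G$ and $L_T$ with the congestion--dilation lemma (Lemma~\ref{congdil}) and the subadditivity of supports (Lemma~\ref{maxsupp}), yielding a bound on $\sigma(L_G, L_T)$, and hence on $\lambda_{\max}(L_G, L_T)$.

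First I would reduce to the case of zero block row sums. Both $L_G$ and $L_T$ contain the same self-loop contributions (the MST construction in Section~\ref{sec:vaidya} keeps all self-loops), so the lemma on row-sum-zeroing that appears just after Lemma~\ref{maxsupp} lets me replace $L_G$ and $L_T$ by their zero-row-sum parts without changing the support. This is essential because the congestion--dilation lemma requires its tridiagonal matrix $\hat B$ to have vanishing block row sums.

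Next, decompose $L_G = \sum_{e \in E} L_e$ into single-edge block Laplacians, and decompose $L_T$ as $L_T = \sum_{e \in E} L_{T,e}$ as follows. For every edge $e = (u,v) \in E$, let $P_e$ denote the unique path in $T$ between $u$ and $v$, and for each tree edge $f$ set $S_f = \{\, e \in E : f \in P_e \,\}$. Define $L_{T,e}$ to be the block Laplacian of $P_e$ in which each tree edge $f \in P_e$ carries the scaled weight $w(f)/|S_f|$. Since $\sum_{e : f \in P_e} |S_f|^{-1} w(f) = w(f)$, these scaled path Laplacians sum to $L_T$, and each $L_{T,e}$ matches the tridiagonal form of $\hat B$ in Lemma~\ref{congdil} (its interior blocks $B_{i-1}+B_i$ and endpoint blocks are positive definite as sums of positive definite matrix weights, and its row sums vanish by construction). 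Likewise $L_e$ matches $\hat A$ up to embedding in the vertex set of $P_e$.

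Now apply Lemma~\ref{congdil} to each pair $(L_e, L_{T,e})$. The dilation is $|P_e| \le n-1$. The congestion on tree edge $f \in P_e$ equals $|S_f|\,\lambda_{\max}\bigl(w(e)\, w(f)^{-1}\bigr)$, and $|S_f| \le m$ trivially. For the matrix ratio I would invoke the MST cut property in the matrix-weighted setting: because $T$ is a maximum spanning tree with respect to $\lambda_{\min}(w(\cdot))$, swapping any $f \in P_e$ for $e$ produces another spanning tree, and maximality forces $\lambda_{\min}(w(f)) \ge \lambda_{\min}(w(e))$. Together with $\lambda_{\max}(XY^{-1}) \le \lambda_{\max}(X)/\lambda_{\min}(Y)$ (a direct consequence of Lemma~\ref{posdefloewner}), this gives
\[
    \lambda_{\max}\bigl(w(e)\, w(f)^{-1}\bigr) \;\le\; \frac{\lambda_{\max}(w(e))}{\lambda_{\min}(w(f))} \;\le\; \frac{\lambda_{\max}(w(e))}{\lambda_{\min}(w(e))} \;=\; \kappa(w(e)) \;\le\; \kappa.
\]
Hence $\sigma(L_e, L_{T,e}) \le (n-1)\cdot m\kappa$ for every $e$, and Lemma~\ref{maxsupp} yields $\sigma(L_G, L_T) \le \kappa\, m(n-1)$, from which the stated eigenvalue bound follows.

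The main obstacle I expect is the matrix-weighted MST cut property: one must check that Prim/Kruskal-style exchange arguments continue to work when edges carry matrices and the objective is $\sum_{f \in T}\lambda_{\min}(w(f))$. Fortunately this scalarizes the problem to a standard MST argument on the scalar weights $\lambda_{\min}(w(\cdot))$, so the inequality $\lambda_{\min}(w(f)) \ge \lambda_{\min}(w(e))$ for $f \in P_e$ is immediate. A secondary bookkeeping point is verifying that the scaled path Laplacians $L_{T,e}$ are genuinely of the form $\hat B$ in Lemma~\ref{congdil} and that they sum to $L_T$; this is straightforward from the definition of $S_f$.
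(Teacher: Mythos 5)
Your proposal is correct and follows essentially the same route as the paper: decompose $L_G$ into single-edge Laplacians and $L_T$ into fractionally-weighted tree-path Laplacians, then combine Lemma~\ref{maxsupp} with the congestion--dilation lemma, using the MST exchange property to bound $\lambda_{\max}(w(e)w(f)^{-1})$ by $\kappa$. Your only deviation is cosmetic but slightly cleaner: you allocate each tree edge $f$ the exact fraction $1/|S_f|$ of its weight rather than the paper's uniform $1/m$, and then bound $|S_f|\le m$, which makes the decomposition sum exactly to $L_T$.
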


\begin{proof}
    For every edge $e = (u, v)$, let $p(e)$ be the path in $T$ from $u$ to $v$ that uses at least $1/m$ fraction of each edge weight. We can write
    \[
        L_G = \sum_{e \in E} L_e \quad \text{and} \quad L_T = \sum_{e \in E} L_{p(e)}
    \]
    where $L_e$ and $L_{p(e)}$ are the block Laplacians of the edges and paths respectively. By Lemma \ref{maxsupp},
    \[
        \sigma(L_G, L_T) \leq \max_{e \in E}\{\sigma(L_e, L_{p(e)})\}.
    \]
    The maximum possible length (edge count) of each $p(e)$ is $n-1$. This is the dilation. Since $T$ is an MST, the minimum eigenvalue of each edge weight in $p(e)$ is at least that of $w(e)$. This means that the congestion is at most $\kappa m$. The congestion-dilation lemma yields the desired result.
\end{proof}

Augmented MST preconditioners have a better upper bound that can be proven similarly.

\begin{theorem}
    Let $G = (V, E, w)$ be a matrix-weighted graph with augmented MST $T'$ as described in Section \ref{sec:vaidya}. Let $L_G$ and $L_{T'}$ be their block Laplacians and let $\kappa$ be the maximum condition number of all the edge weights in $G$. If every vertex has at most $d$ neighbors, then
    \[
        \lambda_{\mathrm{max}}(L_G, L_{T'}) \leq \frac{2\kappa d^3n^2}{t^2}.
    \]
\end{theorem}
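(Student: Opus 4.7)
\medskip

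\noindent\textbf{Proof plan.} The plan is to mirror the proof of the previous theorem (the MST bound) but exploit the shorter paths available in $T'$ and use the degree bound to control congestion. The key structural fact is that any two MST subtrees that contain endpoints of an edge of $G$ are connected by an added edge in $T'$, so every cross-subtree edge of $G$ can be routed through a path of length $O(n/t)$ rather than $O(n)$.

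First, decompose $L_G = \sum_{e \in E} L_e$ into single-edge block Laplacians. For each $e = (u,v) \in E$, construct a path $p(e)$ in $T'$: if $u$ and $v$ lie in the same MST subtree $S_i$, let $p(e)$ be the unique tree path in $S_i$; otherwise, with $u \in S_i$ and $v \in S_j$, follow the tree path in $S_i$ from $u$ to the endpoint of the added edge connecting $S_i$ and $S_j$, cross that added edge, then follow the tree path in $S_j$ to $v$. Since each subtree has at most $n/t$ vertices, the dilation of $p(e)$ is at most $2(n/t - 1) + 1 = O(n/t)$.

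Next, write $L_{T'} = \sum_{e \in E} \alpha_e L_{p(e)}$ by assigning each edge $f \in T'$ a share of its weight to every path through it, subject to $\sum_{e : f \in p(e)} \alpha_e = 1$. The crucial estimate is a bound on the congestion $\mathrm{cong}(f) := |\{e \in E : f \in p(e)\}|$. For a tree edge $f$ inside a subtree $S_i$, only paths $p(e)$ with at least one endpoint in $S_i$ can use $f$; the degree bound gives at most $d|S_i| \leq dn/t$ such edges. For an added edge between subtrees $S_i$ and $S_j$, only paths corresponding to $G$-edges between $S_i$ and $S_j$ use it, which again is bounded by $dn/t$. Hence the scaling can be chosen with $1/\alpha_e = O(dn/t)$. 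Applying Lemma~\ref{congdil} to each pair $(L_e, \alpha_e L_{p(e)})$ gives $\sigma(L_e, \alpha_e L_{p(e)}) \leq \mathrm{dilation}(p(e)) \cdot \max_{f \in p(e)} \lambda_{\mathrm{max}}\!\left(w(e) (\alpha_e w(f))^{-1}\right)$. For tree edges on $p(e)$, the maximum spanning tree property (weighting by $\lambda_{\mathrm{min}}$) gives $\lambda_{\mathrm{min}}(w(f)) \geq \lambda_{\mathrm{min}}(w(e))$; for the added edge, its construction as the maximum weight edge between $S_i$ and $S_j$ gives the same inequality. Thus $\lambda_{\mathrm{max}}(w(e))/\lambda_{\mathrm{min}}(w(f)) \leq \kappa$, and combining dilation $O(n/t)$, congestion factor $O(dn/t)$, and $\kappa$ with Lemma~\ref{maxsupp} yields a bound on $\sigma(L_G, L_{T'})$ of the stated order $O(\kappa d^a n^2/t^2)$.

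The main obstacle will be making the congestion counting precise and covering every edge of $T'$ in the weight-splitting, since this requires verifying that the augmentation procedure in Section~\ref{sec:vaidya} adds edges exactly between subtrees that share a $G$-edge (otherwise the decomposition leaves residual weight on an unused $T'$-edge). A secondary subtlety is the consistent use of the degree bound: one must carefully distinguish edges with both endpoints in $S_i$ from those with a single endpoint, to avoid double-counting when tallying incident $G$-edges. Any looseness in these counts will be absorbed into the constant and the exponent on $d$ in the final bound.
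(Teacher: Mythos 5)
Your proposal is correct and follows essentially the same route as the paper: decompose $L_G$ into single-edge block Laplacians, route each edge through its two subtree paths plus the connector edge, split each $T'$-edge weight evenly among the paths that use it, and multiply congestion by dilation via Lemmas~\ref{maxsupp} and~\ref{congdil}. The one bookkeeping difference is the subtree size: the construction in Section~\ref{sec:vaidya} only guarantees at most $m/t \le dn/t$ vertices per subtree (not $n/t$), which is exactly where the paper's extra factors of $d$ come from (dilation $2dn/t$, congestion $\kappa d^2 n/t$, hence $d^3$ overall), whereas your $n/t$ assumption would yield the formally stronger $O(\kappa d\, n^2/t^2)$; either count lands within the stated bound. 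Your worry about residual weight on $T'$-edges not covered by the splitting is harmless, since any leftover weight is itself a block Laplacian, hence positive semidefinite, and can be paired with a zero summand of $L_G$ in Lemma~\ref{maxsupp}.
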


\begin{proof}
    We perform a similar decomposition to the previous proof. Let $S_1, \ldots, S_t$ be the subtrees of $T'$. For each edge $e = (u, v)$, a path from $u$ to $v$ in $T'$ is not necessarily unique. If both $u$ and $v$ are in the same $S_i$, let $p(e)$ be the unique path from $u$ to $v$ contained in $S_i$. If $u$ is in $S_i$ and $v$ is in $S_j$ with $i \neq j$, let $p(e)$ be the concatenation of the following paths: the unique path in $S_i$ from $u$ to the endpoint of the edge that connects $S_i$ and $S_j$, that edge itself, and the unique path in $S_j$ from the other endpoint to $v$.

    Now we must decide what fraction of each edge weight to use. Each edge $e$ in $S_i$ can be in a support path from any of the $dn/t$ vertices in $S_i$ to any of their $d$ neighbors. This is $d^2n/t$ total paths, so we use $t/(d^2n)$ fraction of each edge weight. Following the same logic as in the previous proof, we get that the congestion is at most $\kappa(d^2n/t)$.

    In the worst case, one of these paths may go across all $dn/t - 1$ edges in one subtree, the edge connecting it to another subtree, and all $dn/t - 1$ edges in the other subtree. Therefore, the dilation is less than $2(dn/t)$ and the rest follows from the congestion-dilation lemma.
\end{proof}

In the context of collision graphs, $\kappa$ is simply $\max_{i, j}\{A_{ij}\}\cdot \gamma_{\mathrm{max}}/\gamma_{\mathrm{min}}$. Since the minimum eigenvalue is at least 1, the preceding bounds on the maximum eigenvalue are also bounds on the condition number of the preconditioned system.

\section{Numerical Benchmarks}
\label{sec:numerics}

\begin{figure}[htpb]
    \includegraphics[width=0.9\textwidth]{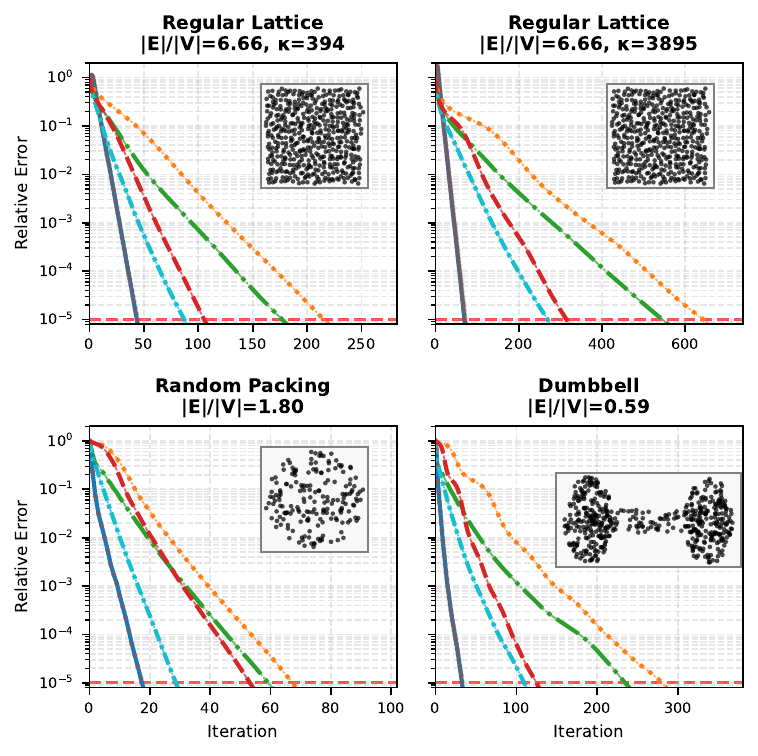}
    \caption{
    Preconditioner convergence behavior for different cell arrangements:
    hexagonal lattices with varying disorder (top row), random packing (bottom left),
    and bridged spheroids (bottom right).
    The insets show cross-sections through the 3D cell configurations.
    The red dashed line in each plot indicates
    the solver relative tolerance $10^{-5}$.
    {Legend:} Identity/no preconditioner (orange dotted line); block Jacobi
    (green long dash-dot); diagonally-shifted block IC(0) (dashed red); block Gauss-Seidel
    (cyan dash-dot); MST preconditioner (solid blue); augmented MST (gray circular markers).
    {Top Row:} Cells arranged on a three-dimensional hexagonal lattice with positional
    noise of mean zero and standard deviation $0.3r$ applied to each cell.
    Each configuration contains $n \approx 50\,000$ cells. The edge-to-vertex ratio is reported
    for each configuration. Left: $\gamma_{\mathrm{med}} = 3 \times 10^4$.
    Right: $\gamma_{\mathrm{med}} = 3 \times 10^3$. $\kappa$ in the title is the full friction
    matrix condition number.
    {Bottom Left:} $n=50\,000$ cells randomly packed in a spheroidal domain with
    $\gamma_{\mathrm{med}} = 3 \times 10^4$.
    {Bottom Right:} Two densely packed spheroids connected by a bridge of randomly
    placed cells, $n \approx 50\,000$ total, $\gamma_{\mathrm{med}} = 3 \times 10^4$.
    }\label{fig:random_benchmark_v2}
\end{figure}

\begin{figure}[htbp]
    \includegraphics[width=0.9\textwidth]{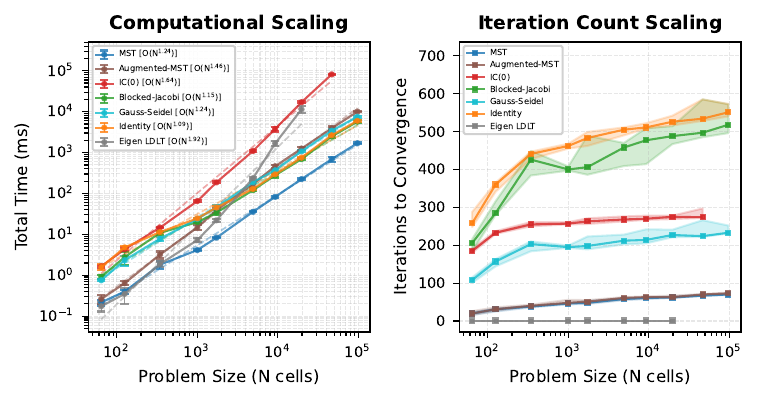}
    \caption{
        Performance comparison of preconditioners for block-structured linear
        systems arising from $n$
        cells arranged in a hexagonal lattice with positional noise
        (30\% of cell radius). Each data point represents the average of 5 independent experiments.
        The friction parameters are $\gamma_{\mathrm{med}} = 3 \times 10^4$ with
        $\gamma_{\parallel} = 2 \times 10^6$ and $\gamma_{\perp} = 8 \times 10^7$.
        {(Left)} Total wall-clock time as a function of problem size $n$ for
        solving the linear system to relative tolerance $10^{-5}$. Times include
        all computational costs: matrix assembly (where required), preconditioner
        construction and factorization, and conjugate gradient iterations.
        The MST preconditioner achieves a speedup of $3.5\times$ compared to unpreconditioned CG
        at $n = 10^5$. The direct solver (Eigen LDLT) excels for $n < 10^3$ but becomes catastrophically slow
        for larger problems. Preconditioners tested:
        (1) identity (no preconditioning), (2) block Jacobi with $LDL^T$ factorization of diagonal blocks,
        (3) MST with tree construction and factorization, (4) augmented-MST with tree construction,
        augmentation and nested (flexible) conjugate gradient (inner relative tolerance $\delta = 0.1$),
        (5) IC(0) with diagonal shift
        and incomplete factorization, (6) block Gauss-Seidel with setup, and
        (7) Eigen's sparse $LDL^T$ direct solver including assembly and factorization.
        The MST preconditioner achieves the fastest total solution times.
        {(Right)} Iteration counts for the same preconditioners as a function of $n$.
    }\label{fig:scaling}
\end{figure}

In the previous sections, we introduced the collision graph and block Laplacians, and we
extended support graph theory to block-structured
matrices, giving a class of preconditioners that can be contructed directly from the collision graph. Although we derived theoretical
bounds on the eigenvalues of the preconditioned system, past experience with
using the conjugate gradient method in finite precision arithmetic demonstrates that
theoretical estimates often do not accurately predict performance in practice \cite{liesen2013krylov}.
For this reason, we conduct benchmarking experiments to compare six preconditioning
strategies:
(1) no preconditioner,
(2) the block Jacobi preconditioner (i.e.,\ the block diagonals of $\Gamma$),
(3) the symmetric block Gauss-Seidel preconditioner,
(4) the MST preconditioner from Section~\ref{sec:vaidya},
(5) the augmented MST preconditioner from Section~\ref{sec:vaidya}, and
(6) the block incomplete Cholesky preconditioner with no edge fill-in together with a diagonal shift $\alpha I$.

Our implementation begins with collision detection using an axis-aligned bounding box
approach \cite{tracy2009efficient} that achieves $\mathcal{O}\lb n \log n\rb$ complexity with an
event-based algorithm. This produces an active set of edges representing potential
contacts, from which we compute contact forces, contact areas, and friction blocks according
to equation~\eqref{eq:cell_cell_friction} and the Hertz contact model. Our
implementation uses a sweep and prune (SAP) algorithm without hierarchical subdivision, which limits
our benchmarks to $n \leq 10^5$ cells where SAP remains efficient.

The resulting contact graph is converted to a block-structured BCSR (Block Compressed
Sparse Row) format for efficient matrix-vector products, with reverse Cuthill-McKee
(RCM) ordering to improve cache locality. The MST-preconditioner is solved using an
$LDL^T$ decomposition directly on the graph structure. The augmented MST uses nested conjugate gradient
iterations following the flexible CG framework \cite{Notay2000}. Since we observed no
iteration count improvements with augmentation, we did not pursue more efficient solution
methods for this variant. All preconditioner construction operations (graph building, tree
computation, and factorization) are cheaper than a single matrix-vector product, with
friction blocks assembled during force calculation at no additional cost.

The solver is implemented in C++ using an expression-based linear algebra system with custom block-structured vectors that leverage BLAS routines. The preconditioner construction process, comprising Prim's algorithm and tree assembly, is highly efficient, requiring less wall-clock time than a single matrix-vector product. After construction and factorization, solving with the MST preconditioner consumes 40–80\% of the time needed for one matrix-vector product. Notably, approximately one-third of this solving time is spent permuting matrix entries between two orderings: the RCM ordering used for matrix-vector operations and the ordering from Prim's algorithm used for factorization. This overhead suggests room for improvement through more sophisticated ordering strategies that simultaneously optimize both matrix-vector products and direct solves.

For our convergence benchmarks, we employ a rigorous testing framework. We work with known solutions to compute true relative errors and use Gauss-Radau error estimators \cite{Meurant2024} to monitor conjugate gradient convergence. To ensure fair comparisons across all benchmarks, we use single-threaded execution throughout.

\subsection{Experimental Setup}

To test the preconditioning strategies, we simulate elastic spherical cells of radius $r = 0.5$
(units are in 10s of $\mu m$) using the Hertz contact model for cell-cell contact areas and
forces. We assume that the cell-substrate friction matrix is diagonal (i.e., $\Gamma^{cs} = \gamma_{\mathrm{med}} I$),
and the cell-cell friction matrix is given by equation~\eqref{eq:cell_cell_friction} with
$\gamma_{\parallel} = 2 \times 10^6$ and $\gamma_{\perp} = 8 \times 10^7$, representing
typical values where cell-cell friction coefficients are 1--3 orders of magnitude greater
than cell-substrate friction \cite{Galle2005}.
We evaluate preconditioner performance across four distinct scenarios designed to test
different geometric configurations and conditioning challenges:

\begin{enumerate}
\item \textbf{Regular hexagonal lattice (baseline):} Cells arranged in hexagonal close
packing where each cell contacts 12 neighbors. We add positional noise of mean zero and
standard deviation $0.3r$ to create varying cell-cell contact
areas. We test with $\gamma_{\mathrm{med}} = 3 \times 10^4$
(well-conditioned) and $\gamma_{\mathrm{med}} = 3 \times 10^3$ (higher condition number).

\item \textbf{Random packing:}
Cells are randomly placed within a spheroidal domain using a rejection sampling algorithm that enforces a minimum distance constraint to prevent overlap. This process generates the irregular connectivity patterns characteristic of biological tissues, avoiding the artifacts of regular lattices.
For all simulations, we use $\gamma_{\mathrm{med}} = 3 \times 10^4$.

\item \textbf{Dumbbell configuration:}
Two densely packed spheroids connected by a thin
bridge of randomly placed cells. This tests preconditioner performance on graphs
with bottleneck structures. We use $\gamma_{\mathrm{med}} = 3 \times 10^4$.
\end{enumerate}

For the convergence behavior study (Figure~\ref{fig:random_benchmark_v2}),
we use $n = 50\,000$ cells in each configuration and track the relative
error reduction over conjugate gradient iterations. For the scaling study
(Figure~\ref{fig:scaling}), we use the regular lattice configuration
with $\gamma_{\mathrm{med}} = 3 \times 10^4$ and vary the problem size from
$n = 10^2$ to $10^5$ cells, measuring both iteration counts and wall-clock time.

All experiments use a relative tolerance of $10^{-5}$ for the conjugate
gradient stopping condition, with true error computed using a known solution vector.
Each data point represents the average of 5~independent random realizations.

\subsection{Benchmarking Results}

Figure~\ref{fig:random_benchmark_v2} presents the convergence behavior of different
preconditioners across four challenging geometric configurations with $n=50\,000$ cells.

The MST preconditioner demonstrates superior performance across all test cases. On hexagonal
lattices (top row), iteration reduction improves from $4.9\times$ at moderate condition number
to $9.2\times$ at high condition numbers, significantly outperforming block Jacobi ($1.2\times$)
and block Gauss-Seidel ($2.5\times$) for both moderate and high condition number scenarios.
Performance remains strong on irregular geometries:
$3.8\times$ reduction for random spheroidal packing and $8.5\times$ for the sparse dumbbell
configuration, where traditional preconditioners struggle.

The augmented variant shows negligible improvement, likely because standard spanning trees
already achieve low stretch for these graphs. Block Gauss-Seidel maintains consistent $2.5\times$
reduction across geometries, while diagonally shifted IC(0) degrades from $2.0\times$ to $1.3\times$
on sparse configurations.

Figure~\ref{fig:scaling} demonstrates computational scaling from $n = 10^2$ to $10^5$ cells.
At production scale ($n = 10^5$), the MST preconditioner achieves between $3.4\times$ and $4.4\times$ wall-clock
speedup over standard methods, with preconditioner construction and application each requiring
less effort than a single matrix-vector product. This efficiency, combined with favorable
iteration count scaling (right panel), ensures performance advantages increase with problem size.
Alternative methods scale poorly: the augmented MST preconditioner incurs overhead from nested iteration, while
incomplete Cholesky performs poorly and requires tuning of the diagonal shift parameter.

\section{Discussion}
\label{sec:discussion}

We have proposed efficient preconditioners for linear systems arising from off-lattice cell-based models.
Using the notions of matrix-weighted graphs and block Laplacians, we extended support graph theory to this problem.
By using MSTs, we obtained preconditioners that are efficient to compute and factor,
while significantly decreasing the condition number, iteration count, and wall-clock time in the conjugate gradient method.
We proved bounds on the condition number
for both standard and augmented MSTs that, while overly pessimistic in practice, demonstrate asymptotic stability.
Although augmented MST preconditioners have proven effective for mesh Laplacians, our implementation using
nested conjugate gradient iterations showed no iteration count improvement for cell-based models.
We hypothesize that the local interaction graphs from spherical cell packings already achieve sufficiently
low stretch without augmentation, though other cell geometries or packing configurations
might benefit from this technique.

We comprehensively benchmarked our proposed preconditioners against identity,
block Jacobi, block Gauss-Seidel, block IC(0), and direct sparse solvers across problem
sizes from $n=10^2$ to $10^5$. The MST preconditioner achieves the best wall-clock times for larger
systems relevant to production simulations while remaining competitive with direct solvers for
small systems.
The magnitude of performance improvement varies with problem characteristics, particularly condition number and graph topology,
rather than degrading systematically with connectivity. Indeed, our most substantial improvements occurred on challenging geometries:
high difference in parallel and perpendicular cell-cell friction coefficients and
near-disconnected configurations. This robust performance across diverse scenarios establishes the MST preconditioner
as consistently advantageous for off-lattice cell-based models.

For practical implementation, several key insights emerge. The collision graph, naturally produced by collision detection algorithms, serves
dual purposes: defining the system's friction matrix and enabling direct construction of the MST preconditioner. This graph representation allows
elegant implementations of the required linear algebra operations without intermediate matrix assembly. Construction of the MST preconditioner, including graph traversal, tree computation, and factorization, requires less computational effort than a single matrix-vector product,
with friction blocks assembled during force calculation at no additional cost. The preconditioner solve requires only 40--80\% of the time for one matrix-vector product,
ensuring that iteration reductions translate directly to wall-clock performance gains.

This paper generalizes and applies the earliest results in support graph theory. While Vaidya's preconditioners are very efficient to compute, more sophisticated preconditioners achieve much better condition numbers and near-linear time convergence. The main tool they employ is the low-stretch spanning tree. These are the basis of Spielman's groundbreaking paper \cite{spielman2014nearly} that uses augmented low-stretch spanning trees as preconditioners. We can generalize the notion of stretch to matrix-weighted graphs by examining the minimum eigenvalues of the edge weights, similar to our approach for MSTs. Substantial progress has been made in solving symmetric diagonally dominant systems with preconditioners derived from low-stretch spanning trees, most recently in \cite{jambulapati2021} and \cite{gao2023robust}. Additionally, with a low-stretch spanning tree, we can implement combinatorial algorithms like those described in \cite{kelner2013} and \cite{lee2013efficient} that do not use the conjugate gradient method at all. We expect that all results from the existing literature can be generalized to block-structured matrices by adding a factor of $\kappa$ to the condition number bounds.

Future work will exploit the hierarchical spatial data structures already required for collision detection in large simulations ($n > 10^5$) where SAP becomes inefficient without spatial subdivision \cite{tracy2009efficient}.
These structures (octrees, Z-order, or Hilbert curve grids) partition cells into clusters of 300–1000 elements to maintain optimal SAP performance at each leaf.
We envision constructing a hierarchical preconditioner where each leaf node maintains its own MST preconditioner for the local interaction subgraph. The full preconditioner consists of these independent local preconditioners applied in parallel, simply ignoring any interactions between regions. While this drops some edges from the preconditioner,
it enables perfect parallelization with no inter-processor communication.
The key insight is that the spatial partitioning required to maintain SAP efficiency also creates
natural subdomains for embarrassingly parallel preconditioning, addressing both  collision detection
and linear solving bottlenecks simultaneously through a unified hierarchical approach.

\appendix

\section{Positive Definite Off-Diagonal Blocks}
\label{app:posdefoff}

An ostensible limitation of the theory presented in this paper is that it only works with block Laplacians, requiring off-diagonal blocks to be zero or negative definite. This section describes methods of working with positive definite off-diagonal blocks as well.

There are two established ways of handling positive off-diagonal entries in the non-block case. The first is a reduction due to Gremban \cite[Lemma 7.3]{Gremban1996} which is also described concisely in \cite[Appendix A]{spielman2014nearly} and \cite[Appendix A.2]{maggs2005finding}. The validity of this reduction immediately transfers to the block case. Let $A$ be a block matrix whose off-diagonal blocks are either zero or definite (positive or negative), and for every row $i$,
\[
    \mat{A_{ii}} \succeq \sum_{j \neq i} |\mat{A_{ij}}|
\]
where $|\cdot|$ leaves positive semidefinite matrices unchanged and negates negative semidefinite ones. This matrix is positive semidefinite by the proof from Lemma \ref{blapposdef}. We may call this a generalized block Laplacian, possibly originating from a similarly defined generalized matrix-weighted graph. Then $A = D + A^{(+)} + A^{(-)}$ where $D$ contains the diagonal blocks, $A^{(+)}$ the positive definite off-diagonal blocks, and $A^{(-)}$ the negative definite ones. To solve the system $A\vec{x} = \vec{b}$, we construct a $2n \times 2n$ block Laplacian system
\[
    A'\begin{pmatrix}
        \vec{x}_1 \\ \vec{x}_2
    \end{pmatrix} = \begin{pmatrix}
        \vec{b} \\ -\vec{b}
    \end{pmatrix} \quad \text{where} \quad A' = \begin{pmatrix}
        D + A^{(-)} & -A^{(+)} \\
        -A^{(+)} & D + A^{(-)}
    \end{pmatrix}.
\]
The desired solution is then $\vec{x} = (\vec{x}_1 - \vec{x}_2)/2$. Thanks to the simplicity of this reduction, the condition number analysis from earlier still applies and $\epsilon$-approximate solutions to the larger system produce $\epsilon$-approximate solutions to the original one.

The other method of solving a generalized Laplacian system is with a maximum weight basis preconditioner \cite{Boman2004}. This is a considerably more complicated technique that requires additional analysis of the condition number, but it does not require a reduction to a larger problem. We have not proven any analogues for the block case.

\section{Assorted Graph Algorithms}

\subsection{Directly Solving Systems from Trees}
\label{sec:treesolve}

Once the $LDL^T$ decomposition of an MST preconditioner $P$ is computed, we need to repeatedly solve systems $P\vec{x} = \vec{b}$. This section describes how to do this in terms of the implicit tree structure of $P$ (i.e., we do not distinguish between the indices of rows and columns and the vertices they represent). We assume the rows and columns of $P$ are ordered as described in Lemma \ref{nofill} so that $L$ has the same sparsity pattern as the lower triangle of $P$. Let $d$ be the order of the blocks of $P$.

\begin{enumerate}[label=\textbf{Step~\arabic*:}, ref=Step~\arabic*, leftmargin=*, labelindent=\parindent]
    \item
    \underline{Forward substitution.}
        Define $\vec{z} = DL^T \vec{x}$ and solve $L\vec{z} = \vec{b}$, and proceed as in
        algorithm~\ref{alg:fwdsubst}.
        The cost is $n - 1$ matrix-vector multiplications of $d \times d$ matrices.

\begin{algorithm}[H]
    \caption{}\label{alg:fwdsubst}
    \begin{algorithmic}
    \Function{ForwardSolve}{$L$, $\vec{b}$}
    \For{$i \gets 1, \ldots, n$}
        \State $\mat{\vec{z}_i} \gets \mat{\vec{b}_i}$
        \ForAll{children $j$ of $i$}
            \State $\mat{\vec{z}_i} \gets \mat{\vec{z}_i} - \mat{L_{ji}\vec{z}_j}$
        \EndFor
    \EndFor
    \State \Return $\vec{z}$
    \EndFunction
    \end{algorithmic}
\end{algorithm}

    \item
    \underline{Block diagonal solve.}
        Define $\vec{y} = L^T \vec{x}$ and solve $D\vec{y} = \vec{z}$.
        The cost is $n$ solves of $d \times d$ matrices.

\begin{algorithm}[H]
    \caption{}\label{alg:diagsolve}
    \begin{algorithmic}
    \Function{DiagSolve}{$D$, $\vec{z}$}
    \For{$i \gets 1, \ldots, n$}
        \State $\mat{\vec{y}_i} \gets \mat{D_{ii}^{-1} \vec{z}_i}$
    \EndFor
    \State \Return $\vec{y}$
    \EndFunction
    \end{algorithmic}
\end{algorithm}

    \item
    \underline{Backward substitution.}
    Solve $L^T \vec{x} = \vec{y}$.
    The cost is $n - 1$ matrix-vector multiplications of $d \times d$ matrices.

\begin{algorithm}[H]
    \caption{}\label{alg:bwdsubst}
    \begin{algorithmic}
    \Function{BackwardSolve}{$L$, $\vec{y}$}
    \For{$i \gets n, \ldots, 1$}
        \State $j \gets \text{parent of }i$
        \State $\mat{\vec{x}_i} \gets \mat{\vec{y}_i} - \mat{L_{ij}^T \vec{x}_j}$
    \EndFor
    \State \Return $x$
    \EndFunction
    \end{algorithmic}
\end{algorithm}

\end{enumerate}

\subsection{Matrix-Vector Product of a graph}
\label{subsec:matvec}

Lastly, we give an algorithm for computing matrix-vector products with the block Laplacian of a matrix-weighted graph. This is needed for a matrix-free implementation of the conjugate gradient method.

\begin{algorithm}[H]
    \caption{}
    \begin{algorithmic}
    \Function{MatVec}{$G = (V, E, w)$, $\vec{v}$}

    \For{$i \gets 1, \ldots n$}
        \State $\mat{\vec{x}_i} \gets w(i, i) \mat{\vec{v}_i}$
    \EndFor

    \For{$(i, j) \in E$}
        \State $\mat{\vec{x}_i} \gets \mat{\vec{x}_i} + w(i, j)(\mat{\vec{v}_i} - \mat{\vec{v}_j})$
        \State $\mat{\vec{x}_j} \gets \mat{\vec{x}_j} + w(i, j)(\mat{\vec{v}_j} - \mat{\vec{v}_i})$
    \EndFor
    \EndFunction
    \end{algorithmic}
\end{algorithm}

\bibliographystyle{amsplain}

\end{document}